\newcommand{\id}[1]{{\text{id}}_{#1}}
\newcommand{\Alt}[2]{{\rm Alt}^{#1}(#2)}
\newcommand{\Sym}[2]{{\text{Sym}}^{#1}(#2)}
\newcommand{\rep}[2]{{\text{Rep}_{#1}(#2)}}
\newcommand{\fraka}{{\mathfrak A}}
\newcommand{\induced}[3]{{\rm Ind}^{#1}_{#2}(#3)}
\newcommand{\Z}{{\mathbbm Z}}
\newcommand{\gr}[1]{{\text{gr}^{#1}}}
\newcommand{\dime}[1]{{\text{dim}(#1)}}
\newcommand{\cl}[1]{{\text{cl}_{#1}}}
\newcommand{\class}[2]{{\rm cl}_{#1}({#2})}
\renewcommand{\id}[1]{{\rm id}_{#1}}
\newcommand{\trace}[1]{{\rm tr}(#1)}
\newtheoremstyle{statement}% <name>
{13pt}% <Space above>
{13pt}% <Space below>
{\it}% <Body font>
{}% <Indent amount>
{\bf}% <Theorem head font>
{.$-$}% <Punctuation after theorem head>
{.5em}% <Space after theorem head>
{}% <Theorem head spec (can be left empty, meaning `normal')>
\theoremstyle{statement}
\newtheorem{theorem}{Theorem}[section]
\newtheorem{lemma}[theorem]{Lemma}
\newtheorem{proposition}[theorem]{Proposition}
\newtheorem{cor}[theorem]{Corollary}
\newtheoremstyle{definition}% <name>
{13pt}% <Space above>
{13pt}% <Space below>
{}% <Body font>
{}% <Indent amount>
{\scshape}% <Theorem head font>
{.}% <Punctuation after theorem head>
{.5em}% <Space after theorem head>
{}% <Theorem head spec (can be left empty, meaning `normal')>
\theoremstyle{definition}
\newtheorem{definition}[theorem]{Definition}
\newtheorem{example}[theorem]{Example}
\newtheorem{remark}[theorem]{Remark}
\newtheoremstyle{definitionprime}% <name>
{13pt}% <Space above>
{13pt}% <Space below>
{}% <Body font>
{}% <Indent amount>
{\scshape}% <Theorem head font>
{$'$.}% <Punctuation after theorem head>
{.5em}% <Space after theorem head>
{}% <Theorem head spec (can be left empty, meaning `normal')>
\theoremstyle{definitionprime}
\newtheoremstyle{remarks}% <name>
{13pt}% <Space above>
{13pt}% <Space below>
{}% <Body font>
{}% <Indent amount>
{\scshape}% <Theorem head font>
{.}% <Punctuation after theorem head>
{.5em}% <Space after theorem head>
{}% <Theorem head spec (can be left empty, meaning `normal')>
\theoremstyle{remarks}
\newtheoremstyle{remarksb}% <name>
{13pt}% <Space above>
{13pt}% <Space below>
{}% <Body font>
{}% <Indent amount>
{\scshape}% <Theorem head font>
{.}% <Punctuation after theorem head>
{\newline}% <Space after theorem head>
{}% <Theorem head spec (can be left empty, meaning `normal')>
\theoremstyle{remarksb}
\newtheoremstyle{underlined}% <name>
{13pt}% <Space above>
{13pt}% <Space below>
{\sl}% <Body font>
{}% <Indent amount>
{\scshape}% <Theorem head font>
{}% <Punctuation after theorem head>
{.5em}% <Space after theorem head>
{}% <Theorem head spec (can be left empty, meaning `normal')>
\theoremstyle{underlined}
\def\@seccntformat#1{\protect\makebox[0pt][r]{\@ifundefined{#1@cntformat}%
   {\csname the#1\endcsname.\quad}%
   {\csname #1@cntformat\endcsname}%
}}
\def\section@cntformat{\S\thesection.\ }
\def\subsection@cntformat{}
\begin{document}
\title{On rings and categories of general representations}

%    Remove any unused author tags.

%    author one information
\author{Shahram Biglari}
\address{Fakult\"at f\"ur Mathematik, Universit\"at Bielefeld, D-33615, Bielefeld, Germany}
\curraddr{} \email{biglari@mathematik.uni-bielefeld.de}
\thanks{}

%    author two information\author{}\address{}\curraddr{} \email{}

\subjclass[2010]{Primary 18F30 - Secondary 20C99}

\keywords{lambda ring, Grothendieck group, derived category, representation of symmetric group}

\date{2010}

\begin{abstract} In this note we show that similar to the classical case the ring of representations of symmetric groups in a tensor derived category is certain ring of symmetric functions. We also show that in the general setting considered here, the Adams operations compute the characteristic series associated to powers of endomorphisms.  
\end{abstract}
\maketitle
\tableofcontents
%%%%%%%%%%%%%%%%%%%%%%%%%%%%%%%%%%%%%%%%%%%%%%%%%%%%%%%%%%%%%%%%%%%%%%%%%%%%%%%
%%%%%%%%%%%%%%%%%%%%%%%%%%%%%%%%%%%%%%%%%%%%%%%%%%%%%%%%%%%%%%%%%%%%%%%%%%%%%%%
%%%%%%%%%%%%%%%%%%%%%%%%%%%%%%%%%%%%%%%%%%%%%%%%%%%%%%%%%%%%%%%%%%%%%%%%%%%%%
%%%%%%%%%%%%%%%%%%%%%%%%%%%%%%%%%%%%%%%%%%%%%%%%%%%%%%%%%%%%%%%%%%%%%%%%%%%%%
\section{Introduction}\label{introduction}
In this note we deal with a a Karoubian ${\mathbbm Q}$-linear tensor category $\fraka$ and the subcategory $\rep{\fraka}{G}$ of representations of a group $G$ in $\fraka$. That is the non-full subcategory of objects $X$ with an action $G\to {\rm Aut}(X)$. Morphisms in this category are $G-$equivariant morphisms.

Consider the case where $G$ is the symmetric group $\Sigma_n$. Let $R_n({\fraka})$ be the Grothendieck group of $\rep{\fraka}{\Sigma_n}$. As in the classical case there is the ring of $\fraka-$representation of symmetric groups
\[
R_\fraka=\coprod_{n\geq 0} R_n({\fraka}).
\]
This ring can be shown to be isomorphic to $R\otimes_{\mathbbm Z}K_0(\fraka)$ where $R$ is the usual ring of ${\mathbbm Q}-$representations of symmetric groups. That is the representation ring of symmetric groups in $\fraka$ is in one to one correspondence with the ring of symmetric functions with coefficients in $K_0(\fraka)$.

In most cases $\fraka$ has more structures than just direct sums and tensor products. For general additive $\fraka$ as above there is a natural $\lambda$-ring structure on $K_0(\fraka)$. Here we are concerned with well-definedness of this structure in the case of abelian categories and in some special cases of triangulated categories. In this case $K_0(\fraka)$ is taken with respect to exact sequences or triangles. We do not make any remark for the case of general triangulated categories. It should be mentioned that for most of the examples of triangulated categories we have in mind, the $\lambda$-ring structure is well-defined.\footnote{See: Biglari, S. \emph{Lambda ring structure on the Grothendieck ring of mixed motives: Preliminary version}, Preprint. 2010. Available from author's website.} 

In the last section we consider the relationship via general characteristic $X\mapsto \overline{\chi}_X$ between the Grothendieck ring of the category of representations in $\fraka$ of a group $G$ and the ring of $\Lambda\bigl({\rm End}_{\fraka}({\mathbbm 1})\bigr)$-valued central functions on $G$. This characteristic is also compatible with respective intrinsic $\lambda$-ring structures or equivalently Adams operations $\psi_n$. It is this structure that helps to compute the trace of arbitrary power of endomorphisms inductively. That is
\[
\overline{\chi}_{\psi_n(X)}(g)=\overline{\chi}_{X}(g^n).
\]
The case of some particular (triangulated) examples shall be considered in forthcoming notes.
%%%%%%%%%%%%%%%%%%%%%%%%%%%%%%%%%%%%%%%%%%%%%%%%%%%%%%%%%%%%%%%%%%%%%%%%%%%%%
%%%%%%%%%%%%%%%%%%%%%%%%%%%%%%%%%%%%%%%%%%%%%%%%%%%%%%%%%%%%%%%%%%%%%%%%%%%%%
\section{Ring and category of representations}\label{sec:rep-ring-i}
%%%%%%%%%%%%%%%%%%%%%%%%%%%%%%%%%%%%%%%%%%%%%%%%%%%%%%%%%%%%%%%%%%%%%%%%%%%%%
Let $\fraka$ be a ${\mathbbm Q}-$linear tensor category in the sense of~\cite[I.0.1.2, I.2.4]{saavedra}.  In particular such a category as $\fraka$ is equipped with an associative, commutative, and unital tensor structure. We assume that ${\fraka}$ is essentially small and Karoubian, i.e. every projector $p=p^2\in {\rm End}(X)$ has a kernel. A $G-$object or representation of a group $G$ in $\fraka$ is a pair $(X, \xi_X)$ consisting of an object $X$ of ${\mathfrak A}$ and a homomorphism $\xi_X\colon G\to \text{Aut}_{{\mathfrak A}}(X)$ written as $a\mapsto a_X$ or just $a\mapsto a$. A $G-$morphism (or $G-$equivariant morphism) between two such representations $(X, \xi_X)$ and $(Y, \xi_Y)$ is a morphism $f\colon X\to Y$ such that $a_Y\circ f=f\circ a_X$ for all $a\in G$.
\begin{definition}
The subcategory $\text{Rep}_{{\mathfrak A}}(G)$ of $\fraka$ is defined to have representations of $G$ in ${\mathfrak A}$ as objects and $G-$morphisms as morphisms.
\end{definition}
We immediately note that $\text{Rep}_{{\mathfrak A}}(G)$ is a ${\mathbbm Q}-$linear tensor subcategory where the action of $G$ on $Z=X\otimes Y$ is given by $a\mapsto a_X\otimes a_Y$. Note that the commutativity and associativity constraints are by functoriality $G-$equivariant. Moreover the unit ${\mathbbm 1}$ provides a trivial representation of $G$ via the homomorphism ${\mathbbm Q}G\to {\mathbbm Q}\to {\rm End}_\fraka({\mathbbm 1})$.
\begin{lemma}\label{thm:rep-abelian}
If $\fraka$ is a ${\mathbbm Q}-$linear abelian category. Then $\rep{\fraka}{G}$ has a unique structure of a ${\mathbbm Q}-$linear abelian category such that the forgetful functor $\rep{\fraka}{G}\to \fraka$ is exact and reflects exactness.
\end{lemma}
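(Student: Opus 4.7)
The plan is to transport the abelian structure from $\fraka$ to $\rep{\fraka}{G}$: I would declare a diagram in $\rep{\fraka}{G}$ to be a kernel, cokernel, or exact sequence exactly when its underlying diagram in $\fraka$ is one. Exactness and reflection of exactness of the forgetful functor are then built into the definition, and uniqueness is automatic, since the notions of kernel, cokernel, mono, and epi are intrinsic to the category and the $\mathbb{Q}$-linear structure is already fixed by the ambient one on $\fraka$.

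The heart of the matter is to show that for every $G$-morphism $f\colon (X,\xi_X)\to (Y,\xi_Y)$ the $G$-actions on $X$ and $Y$ descend canonically via the universal properties to the kernel and cokernel of $f$ computed in $\fraka$. Concretely, given a kernel $k\colon K\to X$ of $f$ in $\fraka$, the identity $f\circ a_X\circ k=a_Y\circ f\circ k=0$ produces for each $a\in G$ a unique endomorphism $\xi_K(a)$ of $K$ with $a_X\circ k=k\circ \xi_K(a)$. Uniqueness of the factorization forces $\xi_K(ab)=\xi_K(a)\xi_K(b)$ and $\xi_K(1)=\id{K}$, so $\xi_K$ takes values in $\text{Aut}_\fraka(K)$, and $(K,\xi_K)$ is an object of $\rep{\fraka}{G}$ for which $k$ is a $G$-morphism visibly satisfying the kernel universal property inside $\rep{\fraka}{G}$. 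The cokernel is dual; the zero object of $\fraka$ with trivial $G$-action gives a zero object of $\rep{\fraka}{G}$, and biproducts lift similarly.

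It then remains to verify the abelian axioms. Faithfulness of the forgetful functor implies that every $G$-morphism which is mono (resp.\ epi) in $\fraka$ is mono (resp.\ epi) in $\rep{\fraka}{G}$; the converse follows from the kernel (resp.\ cokernel) construction above. Applying these constructions, the image and coimage of any $f$ in $\rep{\fraka}{G}$ acquire canonical $G$-actions making the comparison $u\colon \text{coim}(f)\to\text{im}(f)$ a $G$-morphism, and since $u$ is an isomorphism in $\fraka$ its inverse is automatically $G$-equivariant, so $u$ is an isomorphism in $\rep{\fraka}{G}$. All the abelian axioms are now satisfied.

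The only step that truly requires care is the descent of the $G$-action through the universal properties and the verification that $a\mapsto \xi_K(a)$ is a group homomorphism taking values in automorphisms; everything else follows formally from abelianness of $\fraka$ and faithfulness of the forgetful functor. I do not anticipate a substantive obstacle, as every map that needs to be $G$-equivariant is characterized by a universal property that is itself $G$-invariant.
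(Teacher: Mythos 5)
Your proposal is correct and follows essentially the same strategy as the paper: descend the $G$-action to kernels (and dually cokernels) through the universal property, with uniqueness of the induced map giving the homomorphism $G\to\mathrm{Aut}(K)$. You are somewhat more thorough than the printed proof — you also spell out biproducts, the mono/epi characterization, the coimage-to-image comparison, and the uniqueness clause, all of which the paper compresses into a single "similarly" — but the key idea and the diagram chase are identical.
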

\begin{proof}
Let $f\colon X\to Y$ be a morphism in $\rep{\fraka}{G}$. Let $K$ be a kernel of this morphism in $\fraka$. For $a\in G$, let $a_K$ be the unique endomorphism of $K$ making the diagram
\[
\xymatrix{
K\ar@{-->}[d]^-{a_K} \ar[r] & X\ar[d]^-{a_X} \ar[r]^-{f} & Y\ar[d]^-{a_Y}\\
K\ar[r] & X\ar[r]^-{f} & Y
}
\]
commutative. Uniqueness of such morphisms show that the assignment $a\mapsto a_K$ defines a group homomorphism $G\to {\rm Aut}_\fraka(K)$. Similarly cokernel of any morphism in $\rep{\fraka}{G}$ is defined and $\rep{\fraka}{G}\to \fraka$ reflects exactness. 
\end{proof}

\begin{remark}
Many other structures of $\fraka$, when applicable, induce the same structure on $\rep{\fraka}{G}$. We implicitly use such induced structures. However if $\fraka$ is an arbitrary triangulated category, then the above argument does not give $\rep{\fraka}{G}$ a triangulation due to the fact that the cone construction $f\mapsto {\rm cone}(f)$ does not produced any good representable functor. However in this case $\rep{\fraka}{G}$ has one artificial but important structure that we shall consider later (see below~\S\ref{sec:rep-ring-ii}).
\end{remark}
\begin{example}\label{ex:classic}
We consider the following example. For $n\geq 0$ denote by $R_n$ the Grothendieck group of the semi-simple abelian category of finitely generated ${\mathbbm Q}\Sigma_n-$modules. This is a free abelian group on the set of isomorphism classes of irreducible objects (which is indexed by the set of partitions of $n$). Let $0$ have one and only one partition. The ring of representations of symmetric groups is defined to be
\begin{equation}\label{def:ring-rep}
R:=\coprod_{n\geq 0} R_n.
\end{equation}
This is a commutative, associative, and unital ring with the multiplication of classes of a ${\mathbbm Q}\Sigma_p-$module $V$ and a ${\mathbbm Q}\Sigma_q-$module $W$ given by 
\[
\cl{}(V)\cl{}(W)=\cl{}\bigl(\text{Ind}^{\Sigma_{p+q}}_{\Sigma_{p}\times \Sigma_{q}}\bigl(V\otimes W\bigr)\bigr).
\]
Note that each $R_n$ is also a ring but we ignore this structure. Now let $\Lambda_n$ be the ring of symmetric polynomials in $n$ variables, i.e. the subring of $\Z[x_1,\cdots,x_n]$ consisting of elements $f$ with $\sigma f=f$ for all $\sigma\in \Sigma_n$. The substitution $x_{n+1}=0$ defines a morphism $\Lambda_{n+1}\to \Lambda_n$ of graded rings. The graded ring of symmetric functions is defined to be the projective limit 
\[
\Lambda :=\underset{\longleftarrow}{\rm lim}^{\rm gr}\ \Lambda_n
\]
taken in the category of graded rings. The remarkable result is that there is a ring isomorphism $R\simeq \Lambda$. An isomorphism is given by $\class{}{V_\pi}\mapsto s^{\pi}$ where $s^\pi$ is the Schur function associated to $\pi$. Proofs of this can be found in~\cite[\S 1]{Atiyah1966} and ~\cite[III]{knutson}. As we shall see this is extended to more general settings.
\end{example}

Let $\fraka$ be as above an arbitrary Karoubian ${\mathbbm Q}-$linear tensor category. Let $V$ be a finite dimensional ${\mathbbm Q}-$representation of $\Sigma_n$, i.e. an object of $\rep{{\mathbbm Q}-{\rm fgmod}}{\Sigma_n}$. Let $Y$ be an object of $\fraka$. Define the object $V\otimes Y$ of $\fraka$ to be a direct sum of $\dime{V}$ copies of $Y$. This turns out to be a functorial assignment, that is $V\otimes X$ represents a functor from $\fraka$ to ${\mathbbm Q}-{\rm fgmod}$. Therefore there is a natural action of $\Sigma_n$ on $V\otimes Y$. This gives a functor
\[
T\colon \rep{{\mathbbm Q}-{\rm fgmod}}{\Sigma_n}\times \fraka\to \rep{\fraka}{\Sigma_n}.
\]
In fact from tenosr product of categories, but we do not need this. Now let $X$ be an object in $\rep{\fraka}{\Sigma_n}$. Let $V$ be any irreducible finite dimensional ${\mathbbm Q}-$representation of $\Sigma_n$. By basic results of representation theory of $\Sigma_n$ in characteristic zero we know that $V=\pi^V {\mathbbm Q}\Sigma_n$ for an idempotent $\pi^V={\pi^V}\circ {\pi^V}\in {\mathbbm Q}\Sigma_n$. Define
\begin{equation}\label{def:s-lambda}
S_V(X)={\rm Im}_\fraka (\pi^V_X\colon X\to X).
\end{equation}
A better notation is obviously $\underline{\rm Hom}_{\Sigma_n}(V, X)$ but we use the above notation. We let $Irr_n$ be the set isomorphism classes of irreducible finite dimensional ${{\mathbbm Q}}$-representation of $\Sigma_n$.
\begin{lemma}\label{thm:aw-cat}
There is a natural $\Sigma_n-$isomorphism
\begin{equation}\label{aw:cat}
X\simeq \coprod_{V\in Irr_n} V\otimes S_V(X).
\end{equation}
\end{lemma}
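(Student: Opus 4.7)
The plan is to reduce the statement to the Artin--Wedderburn decomposition of $\mathbb{Q}\Sigma_n$ applied inside $\text{End}_\fraka(X)$ via the Karoubian property.

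First, I would fix Wedderburn data: central orthogonal idempotents $e_V\in\mathbb{Q}\Sigma_n$ for $V\in Irr_n$ (with $\sum_V e_V=1$) together with full systems of matrix units $\{\tau^V_{ij}\}_{1\le i,j\le d_V}$ in $e_V\mathbb{Q}\Sigma_n$, where $d_V=\dim_{\mathbb{Q}}V$, satisfying $\tau^V_{ij}\tau^W_{kl}=\delta_{VW}\delta_{jk}\tau^V_{il}$ and $\sum_i\tau^V_{ii}=e_V$, arranged so that $\pi^V=\tau^V_{11}$. One then gets a $\mathbb{Q}$-basis $v_1,\dots,v_{d_V}$ of $V$ compatible with the matrix units, and I record the matrix coefficients of the $\Sigma_n$-action via $\sigma e_V=\sum_{k,l}c^V_{kl}(\sigma)\tau^V_{kl}$.

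Second, via the representation $\xi_X\colon\mathbb{Q}\Sigma_n\to\text{End}_\fraka(X)$ the family $\{(\tau^V_{ii})_X\}_{V,i}$ consists of orthogonal idempotents summing to $\id{X}$. Because $\fraka$ is Karoubian, I obtain a canonical direct sum decomposition
\[
X\ \cong\ \bigoplus_{V\in Irr_n}\bigoplus_{i=1}^{d_V} X^V_i,\qquad X^V_i:=\text{Im}\bigl((\tau^V_{ii})_X\bigr),
\]
with $X^V_1=S_V(X)$ by definition. The identities $\tau^V_{i1}\tau^V_{1i}=\tau^V_{ii}$ and $\tau^V_{1i}\tau^V_{i1}=\tau^V_{11}$ force $(\tau^V_{i1})_X$ and $(\tau^V_{1i})_X$ to restrict to mutually inverse isomorphisms $S_V(X)\cong X^V_i$. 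Stacking these with the basis $\{v_i\}$ yields a morphism
\[
\Phi\colon\ \bigoplus_{V}V\otimes S_V(X)\ =\ \bigoplus_{V}\bigoplus_{i=1}^{d_V}S_V(X)\ \xrightarrow{\ \sim\ }\ \bigoplus_{V}\bigoplus_{i=1}^{d_V}X^V_i\ \cong\ X
\]
which is an isomorphism in $\fraka$ and is natural in $X$ because the whole construction consists of applying fixed elements of $\mathbb{Q}\Sigma_n$ functorially.

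The one nonformal point, and thus the main obstacle, is checking that $\Phi$ is $\Sigma_n$-equivariant. On the source, $\sigma\in\Sigma_n$ acts on $V\otimes S_V(X)$ only through $V$, so its $(i,j)$-block is the scalar $c^V_{ij}(\sigma)$ acting on $S_V(X)$. On the target, reading off the $(i,j)$-block of $\sigma_X$ under the matrix-unit identifications amounts to the endomorphism of $S_V(X)$ induced by $(\tau^V_{1i}\,\sigma\,\tau^V_{j1})_X$, and a direct calculation in $\mathbb{Q}\Sigma_n$ using the matrix-unit relations together with the expansion of $\sigma e_V$ gives
\[
\tau^V_{1i}\,\sigma\,\tau^V_{j1}\ =\ c^V_{ij}(\sigma)\,\tau^V_{11},
\]
so the two blocks coincide. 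I expect this matrix-unit bookkeeping to be the only subtle step; no structure on $\fraka$ beyond $\mathbb{Q}$-linearity and the Karoubian property is used.
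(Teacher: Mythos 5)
Your proof is correct and amounts to a careful unpacking of what the paper's one-line proof calls a ``formal consequence of the corresponding result in the case $\fraka = {\mathbbm Q}\Sigma_n\text{-fgmod}$'': you spell out the Artin--Wedderburn matrix units in ${\mathbbm Q}\Sigma_n$, push them through $\xi_X$, split the idempotents using the Karoubian hypothesis, and verify $\Sigma_n$-equivariance directly. The one place to be careful is the convention matching between the matrix coefficients $c^V_{ij}(\sigma)$ on the abstract module $V$ and the blocks $\tau^V_{1i}\sigma\tau^V_{j1}$ acting on $S_V(X)$; with the choice $v_i=\tau^V_{i1}$ these agree as you claim, so the bookkeeping closes up and the argument is complete.
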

\begin{proof}
This is a formal consequence of the corresponding result in the case $\fraka$ is the category ${\mathbbm Q}\Sigma_n-{\rm fgmod}$. See also~\cite{deligne-2002} where $\Sigma_n-$objects of the form $X=Y^{\otimes n}$ are considered. 
\end{proof}
\begin{example}\label{ex:spi}
For $\fraka$ as above, let $T_n\colon \fraka\to \rep{\fraka}{\Sigma_n}$ be the functor $X\mapsto X^{\otimes n}$. For any irreducible finite dimensional representation $V_\pi$ of $\Sigma_n$ over ${\mathbbm Q}$ (where $\pi$ is a partition of $n$), and following the standard notations we denote 
\[
S_{V_\pi}(T_n(X))=:S_\pi(X).
\]
\end{example}

The isomorphism~\ref{thm:aw-cat} helps us to give alternative definitions for notions familiar from classical representation theory. For example let $p+q=n$, $W$ a representation of $\Sigma_p\times \Sigma_q$ over ${\mathbbm Q}$ and $Y$ an object of $\fraka$. Define
\begin{equation}\label{def:ind}
{\rm Ind}^{\Sigma_n}_{\Sigma_p\times \Sigma_q}(W\otimes Y):={\rm Ind}^{\Sigma_n}_{\Sigma_p\times \Sigma_q}(W)\otimes Y
\end{equation}
This is an object of $\rep{\fraka}{\Sigma_n}$. In particular if $X_p$ and $X_q$ are two respective representations of $\Sigma_p$ and $\Sigma_q$ in $\fraka$, then by~\ref{thm:aw-cat} the representation $X_p\otimes X_q$ of $\Sigma_p\times \Sigma_q$ is isomorphic to a direct sum of representations of the form $V\otimes Y$. Applying the above definition of induced modules to each summand, we obtain a representation of $\Sigma_n$ which is denoted by ${\rm Ind}^{\Sigma_n}_{\Sigma_p\times \Sigma_q}(X_p\otimes X_q)$.
\begin{remark}
The restriction to $G=\Sigma_n$ in the above discussions is only for conventional reasons and simplicity of formulas.
\end{remark}

%%%%%%%%%%%%%%%%%%%%%%%%%%%%%%%%%%%%%%%%%%%%%%%%%%%%%%%%%%%%%%%%%%%%%%%%%%%%%
\section{Operations on Grothendieck rings}\label{sec:ad-ho}
%%%%%%%%%%%%%%%%%%%%%%%%%%%%%%%%%%%%%%%%%%%%%%%%%%%%%%%%%%%%%%%%%%%%%%%%%%%%%
Let $\fraka$ be an essentially small Karoubian ${\mathbbm Q}-$linear tensor category. Let the notation below be as in~\ref{ex:spi}. It is easily seen that $K_0(\fraka)$ is an associative commutative unital ring. We use only the general results of~\cite[1]{deligne-2002}. This section overlaps with~\cite{Heinloth2007} and follows the suggestion from~\cite[1.]{fdmtm}.

For an element $X$ of the set of isomorphism classes of $\fraka$ define as in~\cite{fdmtm} the element
\begin{equation}\label{lambda:eq}
\lambda_{\Sigma}(X)=\sum_{\mu} \cl{}\bigl(S_\mu(X)\bigr)\otimes \cl{}(V_\mu)t^{|\mu|}\in  K_0(\fraka)_R[\![t]\!]^{\times}
\end{equation}
where the sum is taken over all partitions $\mu$ of $|\mu|\geq 0$, $V_\mu$ is the irreducible ${\mathbbm Q}\Sigma_{|\mu|}-$module corresponding to $\mu$, and $K_0(\fraka)_R:=K_0(\fraka)\otimes_\Z R$ for $R$ is the representation ring~\eqref{def:ring-rep}.

\begin{lemma}\label{glr-additive}
$X\mapsto \lambda_{\Sigma}(X)$ is a well-defined monomorphism of abelian groups from $K_0(\fraka)\to 1+tK_0(\fraka)_R[\![t]\!]$.
\end{lemma}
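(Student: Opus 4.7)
My plan is to establish three things: (i) $\lambda_\Sigma(X)$ lies in $1+tK_0(\fraka)_R[\![t]\!]$; (ii) $\lambda_\Sigma$ is multiplicative on direct sums, which both makes the assignment well-defined on $K_0(\fraka)$ and turns it into a group homomorphism into the multiplicative group $1+tK_0(\fraka)_R[\![t]\!]$; and (iii) extracting the coefficient of $t$ recovers $\cl{}(X)$, whence injectivity. Part (i) is immediate: only $\mu=\emptyset$ contributes in degree zero, giving $\cl{}(\mathbbm{1})\otimes \cl{}(V_\emptyset)=1$ since $X^{\otimes 0}=\mathbbm{1}$ and $V_\emptyset$ is the one-dimensional representation of $\Sigma_0$.

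For part (ii), the starting point is the $\Sigma_n$-equivariant expansion
\[
(X\oplus Y)^{\otimes n}\simeq\bigoplus_{p+q=n}{\rm Ind}^{\Sigma_n}_{\Sigma_p\times\Sigma_q}\bigl(X^{\otimes p}\otimes Y^{\otimes q}\bigr),
\]
a formal binomial-type identity in $\rep{\fraka}{\Sigma_n}$ valid in any Karoubian ${\mathbbm Q}$-linear tensor category and interpreted via~\eqref{def:ind}. Applying the functor $S_{V_\mu}$ and invoking Lemma~\ref{thm:aw-cat} together with Frobenius reciprocity and the classical branching formula ${\rm Res}^{\Sigma_n}_{\Sigma_p\times\Sigma_q}(V_\mu)\simeq\bigoplus_{\lambda\vdash p,\,\nu\vdash q}(V_\lambda\boxtimes V_\nu)^{\oplus c^\mu_{\lambda\nu}}$ (where $c^\mu_{\lambda\nu}$ denote the Littlewood--Richardson coefficients), I obtain
\[
S_\mu(X\oplus Y)\simeq \bigoplus_{\lambda,\nu}\bigl(S_\lambda(X)\otimes S_\nu(Y)\bigr)^{\oplus c^\mu_{\lambda\nu}}.
\]
Substituting into $\lambda_\Sigma(X\oplus Y)$, interchanging sums, and using the identity $\sum_\mu c^\mu_{\lambda\nu}\cl{}(V_\mu)=\cl{}(V_\lambda)\cdot\cl{}(V_\nu)$ in $R$ (the product rule recalled in Example~\ref{ex:classic}) collapses the double sum into $\lambda_\Sigma(X)\cdot\lambda_\Sigma(Y)$.

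Part (iii) is a clean coefficient extraction: the only partition of $1$ is $[1]$, with $S_{[1]}(X)=X$ and $V_{[1]}={\mathbbm Q}$, so the $t$-coefficient of $\lambda_\Sigma(X)$ equals $\cl{}(X)\otimes 1$. The $t^1$-coefficient projection $1+tK_0(\fraka)_R[\![t]\!]\to K_0(\fraka)_R$ is a group homomorphism from the multiplicative target to the additive group, and its composition with $\lambda_\Sigma$ is the map $\cl{}(X)\mapsto\cl{}(X)\otimes 1$; since $R$ is free as a ${\mathbbm Z}$-module with $1\in R_0\simeq{\mathbbm Z}$ a direct summand, this composition is injective, and hence so is $\lambda_\Sigma$.

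The main obstacle is part (ii): the classical computation of $S_\mu$ on a direct sum must be lifted from ${\mathbbm Q}$-representation theory of $\Sigma_n$ to the general tensor category $\fraka$. Lemma~\ref{thm:aw-cat} is precisely the tool that enables this transfer, after which everything reduces to standard bookkeeping with Littlewood--Richardson coefficients and with the ring structure of $R$.
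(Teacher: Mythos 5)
Your proof uses the same core argument as the paper: the decomposition $S_\pi(X\oplus Y)\simeq\coprod_{\mu,\eta}\bigl(S_\mu(X)\otimes S_\eta(Y)\bigr)^{\oplus c^\pi_{\mu,\eta}}$ (cited by the paper from Deligne's Proposition~1.8, rederived by you from the binomial expansion plus Frobenius reciprocity), together with the product rule $\class{}{V_\mu}\class{}{V_\eta}=\sum_\pi c^\pi_{\mu,\eta}\class{}{V_\pi}$ in $R$. Your injectivity argument, which the paper leaves implicit, is sound, though there is a small slip to fix: the $t$-coefficient of $\lambda_\Sigma(X)$ is $\class{}{X}\otimes\class{}{V_{(1)}}$, where $\class{}{V_{(1)}}$ generates $R_1\simeq\Z$, not the unit $1\in R_0$ of $R$; since $R_1$ is still a rank-one free direct summand of the free $\Z$-module $R$, the split-injectivity conclusion stands unchanged.
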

\begin{proof}
We must show that $\lambda_\Sigma ({X\oplus Y})=\lambda_\Sigma (X)\lambda_\Sigma (Y)$. We compare the coefficients of $t^n$. Let $\pi$ be a partition of $n$. By~\cite[Proposition 1.8]{deligne-2002} we have the formula
\begin{equation}\label{def:schur-dec}
S_\pi(X\oplus Y)\simeq \coprod_{\mu,\eta} S_{\mu}(X)\otimes S_{\eta}(Y)^{\oplus c^\pi_{\mu,\eta}}
\end{equation}
which is in turn a result of functorial isomorphism in~\ref{thm:aw-cat}. Note that the integers $c^\pi_{\mu,\eta}$ satisfy
\[
R_n\ni \induced{\Sigma_n}{\Sigma_p\times \Sigma_q}{V_\mu\otimes V_\eta}=\coprod_\pi V_\pi^{\oplus c^\pi_{\mu, \eta}}.
\]
Therefore $\sum_{|\pi|=n}\class{}{S_\pi(X\oplus Y)}\otimes \class{}{V_\pi}$ is exactly the coefficient of $t^n$ in $\lambda_\Sigma (X)\lambda_\Sigma (Y)$.
\end{proof}

With notations as in~\ref{glr-additive}, consider $\lambda_\Sigma$. Composing with natural projection onto $K_0(\fraka)\otimes \cl{}(V_\pi)\simeq K_0(\fraka)$, we obtain a well-defined (not a homomorphism) mapping $S_\pi\colon K_0(\fraka)\to K_0(\fraka)$ extending $S_\pi(\cl{}(X))=\cl{}\bigl(S_\pi(X)\bigr)$. We use the ring structure of $K_0(\fraka)$ and define the product $S_\mu S_\eta$ value-wise. A few of the following results can be derived differently, however we give direct proofs.
\begin{lemma}
For any $x,y\in K_0(\fraka)$ and each partition $\pi$ we have
\begin{equation}\label{sum-inverse}
S_\pi(-x)=(-1)^{|\pi|}S_{\pi^t}(x)\text{  and }S_\pi(x+y)=\sum_{\mu, \eta} c^\pi_{\mu,\eta}S_\mu(x) S_\eta(y).
\end{equation}
\end{lemma}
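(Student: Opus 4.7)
The plan is to derive both identities from the multiplicativity of $\lambda_\Sigma$ already proved in Lemma~\ref{glr-additive}.

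The second identity is obtained by comparing the coefficients of $\cl{}(V_\pi)\,t^{|\pi|}$ on the two sides of $\lambda_\Sigma(x+y)=\lambda_\Sigma(x)\lambda_\Sigma(y)$. Expanding the product and substituting $V_\mu V_\eta=\sum_\pi c^\pi_{\mu,\eta}V_\pi$ in $R$, this coefficient on the right reads $\sum_{\mu,\eta} c^\pi_{\mu,\eta} S_\mu(x) S_\eta(y)$, while on the left it is $S_\pi(x+y)$ by the definition of $S_\pi$ as the $V_\pi$-component of $\lambda_\Sigma$; the $\Z$-linear independence of the $V_\pi$ in $R$ yields the equality.

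For the first identity I substitute $y=-x$ into the second. Because $\lambda_\Sigma(0)=1$, the $\cl{}(V_\pi)\,t^{|\pi|}$-coefficient must vanish for $|\pi|\ge 1$, giving the recursion
\[
S_\pi(-x)=-\sum_{\substack{|\mu|+|\eta|=|\pi|\\ \mu\neq\emptyset}}c^\pi_{\mu,\eta}\,S_\mu(x)\,S_\eta(-x),
\]
in which only $S_\eta(-x)$ with $|\eta|<|\pi|$ appear on the right. Induction on $|\pi|$ then reduces the closed form $S_\pi(-x)=(-1)^{|\pi|}S_{\pi^t}(x)$ to the identity
\[
\sum_{|\mu|+|\eta|=|\pi|}(-1)^{|\eta|}\,c^\pi_{\mu,\eta}\,S_\mu(x)\,S_{\eta^t}(x)=0\quad\text{in }K_0(\fraka),\qquad |\pi|\ge 1. \qquad (\ast)
\]

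To establish $(\ast)$, I would invoke its universal counterpart in the ring $\Lambda$ of symmetric functions. Combining the Cauchy and dual Cauchy identities,
\[
\Bigl(\sum_\mu s_\mu(X)s_\mu(Y)\Bigr)\Bigl(\sum_\eta(-1)^{|\eta|}s_{\eta^t}(X)s_\eta(Y)\Bigr) = \prod_{i,j}(1-x_iy_j)^{-1}\!\cdot\!\prod_{i,j}(1-x_iy_j)=1,
\]
and reading off the coefficient of $s_\pi(Y)$ for $|\pi|\ge 1$ gives the $\Lambda$-version of $(\ast)$. The main subtle point is the descent to $K_0(\fraka)$ via the evaluation $s_\mu(X)\mapsto S_\mu(x)$; this is legitimate because $\Lambda$ is the free $\lambda$-ring on one generator and $S_\mu$ is by construction the $V_\mu$-component of $\lambda_\Sigma$, so the specialization is exactly the one prescribed by the ambient $\lambda$-ring structure on $K_0(\fraka)$.
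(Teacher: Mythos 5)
Your overall strategy is the same as the paper's: both proofs deduce the second identity directly from the multiplicativity of $\lambda_\Sigma$, then reduce the first identity to the universal vanishing
\[
\sum_{\mu,\eta}(-1)^{|\eta|}c^\pi_{\mu,\eta}\,S_\mu(x)\,S_{\eta^t}(x)=0 \qquad(|\pi|\ge 1),
\]
and finally transfer the corresponding Schur-function identity from $\Lambda$ to $K_0(\fraka)$ via $\mathrm{ev}_x$. You differ from the paper in how you prove the identity inside $\Lambda$: you use the Cauchy and dual Cauchy kernels and compare coefficients of $s_\pi(Y)$, whereas the paper cites Knutson to the effect that $\Lambda$ is the free special $\lambda$-ring on one generator $a_1$, writes $s^\pi=S_\pi(a_1)$, and observes that the sum collapses to $S^\pi(a_1-a_1)=0$. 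Both routes are correct; the Cauchy-kernel computation is more explicit and self-contained, while the paper's argument is shorter once one grants the $\lambda$-ring interpretation of $\Lambda$.

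There is, however, a real gap in the last step. You write that the descent to $K_0(\fraka)$ is ``legitimate because $\Lambda$ is the free $\lambda$-ring on one generator and $S_\mu$ is by construction the $V_\mu$-component of $\lambda_\Sigma$, so the specialization is exactly the one prescribed by the ambient $\lambda$-ring structure on $K_0(\fraka)$.'' That does not justify the transfer. The identity $(\ast)$ involves \emph{products} $S_\mu(x)\,S_{\eta^t}(x)$ in $K_0(\fraka)$, while its $\Lambda$-counterpart involves products $s_\mu\,s_{\eta^t}$. To move one to the other you need $\mathrm{ev}_x\colon R\simeq\Lambda\to K_0(\fraka)$, $s_\mu\mapsto S_\mu(x)$, to be a ring homomorphism, i.e.
\[
S_\mu(x)\,S_\eta(x)=\sum_\pi c^\pi_{\mu,\eta}\,S_\pi(x).
\]
For $x=\class{}{X}$ this is a genuine categorical input — it is exactly \cite[Proposition 1.6]{deligne-2002} (the isomorphism $S_\mu(X)\otimes S_\eta(X)\simeq\bigoplus_\pi S_\pi(X)^{\oplus c^\pi_{\mu,\eta}}$), and the paper cites it precisely for this purpose. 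It is not a formal consequence of $S_\mu$ being the $V_\mu$-component of $\lambda_\Sigma$, nor can you invoke a special $\lambda$-ring structure on $K_0(\fraka)$ at this point, since that structure (and in particular its compatibility with products) is only established later in the paper, via Proposition~\ref{thm:special-additive}, whose proof in turn uses this lemma. You should therefore first prove (or cite) the multiplicativity of $\mathrm{ev}_{\class{}{X}}$ before transferring the $\Lambda$-identity; once that is in place, the rest of your argument goes through.

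Two further minor remarks: the inductive-recursion detour you insert before arriving at $(\ast)$ is harmless but unnecessary — once the second identity and $(\ast)$ are in hand, the first identity follows directly; and in your Cauchy computation, the second factor $\sum_\eta(-1)^{|\eta|}s_{\eta^t}(X)s_\eta(Y)=\prod_{i,j}(1-x_iy_j)$ is correct, but it is worth noting that it comes from substituting $y_j\mapsto -y_j$ in the \emph{dual} Cauchy identity, since readers may otherwise confuse sign negation of variables with the formal $\lambda$-ring inverse (which is what the lemma is computing).
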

\begin{proof}
For each generator $\cl{}(V_\mu)$ of $R$ define ${\text{ev}_x}(V_\mu)$ to be $S_\mu(x)$. This gives a well-defined $\Z-$linear map
\begin{equation}\label{def:ev}
{\text{ev}_x}\colon R\to K_0(\fraka).
\end{equation}
First let $x=\cl{}(X)$. It follows from~\cite{deligne-2002}*{Proposition 1.6} that for partitions $\mu$ and $\eta$ we have
\[
S_\mu(x)S_\eta(x)=\sum_\pi c^\pi_{\mu,\eta}S_\pi(x).
\]
That is ${\text{ev}_x}$ is in fact a ring homomorphism for $x=\cl{}(X)$. By definition of the ring structure of $R$ and the identity $\lambda_\Sigma(x+y)=\lambda_\Sigma(x)\lambda_\Sigma(y)$ we obtain the second equality. For the first equality note that by making use of the second equality it is enough to show that for any object $X$ of $\fraka$ and any partition $\pi$ with $|\pi|>0$ we have in $K_0(\fraka)$
\[
\sum_{\mu,\eta}(-1)^{|\eta|}c^\pi_{\mu,\eta}\cl{}\bigl(S_\mu(X)\otimes S_{\eta^t}(X)\bigr)=0.
\]
To see this note that the left hand side is the image under ${\text{ev}_{\cl{}(X)}}$ of a similar sum in $\Lambda\simeq R$ of~\ref{ex:classic} with $S_\pi$'s replaced by corresponding Schur functions $s^\pi$. Now by the general results~\cite{knutson}*{Chap. I, III}, $\Lambda$ is the free special $\lambda-$ring on one variable $a_1$ and any function $s^\pi$ can be written as $S_\pi(a_1)$. Therefore the sum above is
\[
\sum_{\mu,\eta}(-1)^{|\eta|}c^\pi_{\mu,\eta}S_\mu(a_1)\otimes S_{\eta^t}(a_1)=S^\pi(a_1-a_1)=0
\]
\end{proof}
\begin{cor}\label{thm:ev-ring}
${\rm ev}_x$ is a ring homomorphism for any $x\in K_0(\fraka)$, that is
\[
S_\mu(x)S_\eta(x)=\sum_\pi c^\pi_{\mu,\eta}S_\pi(x).
\]
\end{cor}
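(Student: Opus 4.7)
The plan is to reduce the general assertion to the case $x=\cl{}(X)$ already established within the proof of the preceding lemma, by writing an arbitrary element of $K_0(\fraka)$ as a difference of classes of objects and propagating the desired identity along a ring homomorphism $R\otimes_\Z R\to K_0(\fraka)$.

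First I would write $x=a-b$ with $a=\cl{}(X)$ and $b=\cl{}(Y)$ for suitable objects of $\fraka$. Combining the two formulas~\eqref{sum-inverse} of the preceding lemma yields the expansion
\[
S_\pi(x)=\sum_{\alpha,\beta}(-1)^{|\beta|}c^\pi_{\alpha,\beta}\,S_\alpha(a)\,S_{\beta^t}(b),
\]
a universal polynomial expression in the quantities $S_\alpha(a)$ and $S_\gamma(b)$. Since both ${\rm ev}_a$ and ${\rm ev}_b$ are already known to be ring homomorphisms $R\to K_0(\fraka)$, and since their images commute inside the commutative ring $K_0(\fraka)$, they combine into a single ring homomorphism
\[
\Phi\colon R\otimes_\Z R\longrightarrow K_0(\fraka),\qquad r\otimes s\longmapsto {\rm ev}_a(r)\cdot {\rm ev}_b(s).
\]

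Identifying $R\otimes R$ with $\Lambda\otimes \Lambda$ via~\ref{ex:classic}, and letting $a_1,a_2$ denote the first generators of the two factors, the previous display translates to $\Phi\bigl(s^\pi(a_1-a_2)\bigr)=S_\pi(x)$ for every partition $\pi$. It now suffices to establish, in $R\otimes R$ itself, the universal identity
\[
s^\mu(z)\,s^\eta(z)=\sum_\pi c^\pi_{\mu,\eta}\,s^\pi(z), \qquad z:=a_1-a_2,
\]
since applying $\Phi$ then yields exactly $S_\mu(x)\,S_\eta(x)=\sum_\pi c^\pi_{\mu,\eta}\,S_\pi(x)$, as required.

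This last identity is the structural multiplication formula for Schur functions viewed as $\lambda$-operations, which holds in any special $\lambda$-ring; in particular it holds in $R\otimes R\simeq \Lambda\otimes \Lambda$, which is such a ring by the results of~\cite{knutson} invoked in~\ref{ex:classic}. The crux, as in the end of the proof of the preceding lemma, is the universality of $\Lambda$ as the free special $\lambda$-ring on one generator together with the consequent well-definedness of each $s^\pi$ as a $\lambda$-operation; once that machinery is accepted, the reduction via $\Phi$ makes the corollary a purely formal consequence of~\eqref{sum-inverse}.
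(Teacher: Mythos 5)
Your argument is correct and supplies exactly the derivation the paper leaves implicit by labeling this a corollary: write $x=\cl{}(X)-\cl{}(Y)$, expand $S_\pi(x)$ via the two identities of~\eqref{sum-inverse}, and transport the Littlewood--Richardson relation $s^\mu s^\eta=\sum_\pi c^\pi_{\mu,\eta}s^\pi$ through the ring homomorphism $\Phi\colon R\otimes_{\Z} R\to K_0(\fraka)$ assembled from the two known ring homomorphisms ${\rm ev}_{\cl{}(X)}$ and ${\rm ev}_{\cl{}(Y)}$. This is the same reduction to universal identities in the free special $\lambda$-ring that the paper already uses at the end of the preceding lemma's proof (applying ${\rm ev}_{\cl{}(X)}$ to $S^\pi(a_1-a_1)=0$), so your approach matches the paper's in substance.
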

Now define the homomorphism $\lambda\colon K_0(\fraka)\to K_0(\fraka)[\![t]\!]^{\times}$ by extending
\begin{equation}\label{def:lambda:eq:alt}
\lambda(\cl{}(X))=\sum \cl{}(\Alt{n}{X})t^n.
\end{equation}
Note that by~\ref{glr-additive} this is a well-defined homomorphism because product of summands in $K_0(\fraka)\otimes \class{}{V_{(n)^t}}$ lands in similar summand. We note that
\begin{proposition}\label{thm:special-additive}
Thus defined $\lambda$-ring structure~\eqref{def:lambda:eq:alt} on $K_0(\fraka)$ is special.
\end{proposition}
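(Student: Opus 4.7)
The plan is to verify the two identities that together with additivity characterize a special $\lambda$-ring structure:
\begin{equation*}
\lambda^n(xy)=P_n(\lambda^{\leq n}x,\lambda^{\leq n}y), \qquad \lambda^m(\lambda^n x)=P_{m,n}(\lambda^{\leq mn}x),
\end{equation*}
where $P_n,P_{m,n}\in\Z[X_i,Y_j]$ are the universal polynomials determined once and for all by the free special $\lambda$-ring on one, respectively two, generators, namely $\Lambda\simeq R$ of Example~\ref{ex:classic} (and its analogue $\Lambda\otimes_\Z\Lambda$).

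\textbf{Step 1: Reduce to classes of objects.} By Lemma~\ref{glr-additive}, $\lambda_\Sigma$ is additive, and the first equality of~\eqref{sum-inverse} expresses $\lambda^n$ on $-\cl{}(X)$ polynomially in terms of $\lambda^\bullet(\cl{}(X))$. Thus both sides of each identity are universal polynomial expressions in the classes $\cl{}(X),\cl{}(Y)$, and it is enough to test them on pairs $x=\cl{}(X)$, $y=\cl{}(Y)$ with $X,Y$ honest objects of $\fraka$.

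\textbf{Step 2: Rewrite both sides via Schur functors.} The left hand sides are $\cl{}(\Alt{n}{X\otimes Y})$ and $\cl{}(\Alt{m}{\Alt{n}{X}})$. These admit explicit decompositions as direct sums of objects of the form $S_\mu(X)\otimes S_\eta(Y)$ (resp.\ $S_\pi(X)$) with combinatorially prescribed multiplicities; these categorical Cauchy and plethysm formulas are established in~\cite[\S 1]{deligne-2002} by formal arguments valid in any essentially small Karoubian $\mathbbm{Q}$-linear tensor category. The right hand sides, under the isomorphism $R\simeq \Lambda$, are the images of the corresponding symmetric-function identities under the ring homomorphism $\text{ev}_{\cl{}(X)}\colon R\to K_0(\fraka)$ of~\eqref{def:ev} and, for the product identity, under the two-variable homomorphism $\text{ev}_{\cl{}(X)}\otimes\text{ev}_{\cl{}(Y)}\colon R\otimes_\Z R\to K_0(\fraka)$, which is a ring homomorphism by Corollary~\ref{thm:ev-ring} applied to $X$ and $Y$ separately. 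The symmetric-function identities themselves hold in $\Lambda$ (and in $\Lambda\otimes\Lambda$) by the classical result that $\Lambda$ is the free special $\lambda$-ring on one generator~\cite[III]{knutson}.

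\textbf{Step 3: Match coefficients.} Comparing the two descriptions term by term, the point is that the combinatorial multiplicities coming from the categorical plethysm formulas of~\cite{deligne-2002} coincide, by their common combinatorial definition, with the coefficients appearing after expansion of $\lambda^n(s^{(1)}\cdot s^{(1)})$ and $\lambda^m(\lambda^n(s^{(1)}))$ into products of Schur functions inside $\Lambda\otimes\Lambda$ and $\Lambda$. Applying $\text{ev}$ then converts the symmetric-function identities into the desired identities in $K_0(\fraka)$.

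\textbf{Main obstacle.} The single non-formal point is Step 3: the verification that the multiplicities with which $S_\mu(X)\otimes S_\eta(Y)$ occurs in $\Alt{n}{X\otimes Y}$, and with which $S_\pi(X)$ occurs in $\Alt{m}{\Alt{n}{X}}$, are exactly the coefficients produced by the universal polynomials $P_n$ and $P_{m,n}$. Once this identification is recorded, everything else is a transport of universal symmetric-function identities along the evaluation homomorphisms already available from Corollary~\ref{thm:ev-ring}; no new categorical input beyond~\cite{deligne-2002} is needed.
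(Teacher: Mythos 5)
The paper does not give an argument here at all: its proof of~\ref{thm:special-additive} consists of the single sentence citing Heinloth~\cite[4.1]{Heinloth2007}, so there is no internal proof to compare against line by line. What you write is a reasonable reconstruction of the kind of argument that the cited reference carries out: test the two universal identities (multiplicativity of $\lambda_t$ and the plethysm identity $\lambda^m\circ\lambda^n=P_{m,n}(\lambda^\bullet)$) on classes of honest objects, transport the identities from $\Lambda\simeq R$ (the free special $\lambda$-ring on one generator) along the evaluation homomorphisms ${\rm ev}_{\cl{}(X)}$ of~\eqref{def:ev} and its two-variable analogue, and use Deligne's categorical Cauchy and plethysm decompositions from~\cite{deligne-2002} to identify the coefficients. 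This matches the paper's intent; it simply unpacks the citation.

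One place where your Step 1 is thinner than it needs to be: reducing the verification to pairs $\bigl(\cl{}(X),\cl{}(Y)\bigr)$ is genuinely immediate only for the first axiom, because the set of pairs $(x,y)$ with $\lambda_t(xy)=\lambda_t(x)\cdot\lambda_t(y)$ in $\Lambda\bigl(K_0(\fraka)\bigr)$ is closed under addition and negation in each variable. For the composition axiom, showing that the set of $x$ with $\lambda_t(\lambda^n x)=\Lambda^n(\lambda_t x)$ for all $n$ is closed under addition and negation uses the first axiom already established on all of $K_0(\fraka)$, together with the additivity formula for $\Lambda^n$ in the special $\lambda$-ring $\Lambda\bigl(K_0(\fraka)\bigr)$; the two axioms therefore have to be treated in that order rather than in parallel, and your phrase ``both sides of each identity are universal polynomial expressions in the classes'' papers over this ordering. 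Once that is fixed the plan goes through, so this is a gap in exposition rather than in substance, and Step 3 is indeed, as you say, just the observation that Deligne's multiplicities and the coefficients of $P_n$, $P_{m,n}$ have the same combinatorial definition.
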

\begin{proof}
This is proved in~\cite[4.1]{Heinloth2007}.
\end{proof}

Now consider the category $K^b(\fraka)$ of bounded complexes of $\fraka$ with morphisms being those of complexes up to homotopy equivalence. This is a Karoubian ${\mathbbm Q}-$linear tensor triangulated category in all senses that could be understood. Moreover the embedding $\fraka\to K^b(\fraka)$ is a tensor functor.

Define $K_0\bigl(K^b(\fraka)\bigr)$ to be the quotient of free abelian group on the set of isomorphism classes of objects of $K^b(\fraka)$ by the subgroup generated by $\cl{}(Z)-\cl{}(X)-\cl{}(Y)$ for any distinguished triangle $X\to Z\to Y\to X[1]$. In particular $\cl{}(X)=\cl{}(X')$ for any homotopic objects $X\to X'$. Note that since $\fraka$ is Karoubian, the natural embedding $\fraka\to K^b(\fraka)$ by~\cite{Gillet1996}*{3.2.1} induces an isomorphism on Grothendiek groups.
\begin{remark}
Using the isomorphism $i_\ast\colon K_0(\fraka)\to K_0(K^b(\fraka))$ we may want to introduce a $\lambda-$ring structure on $K_0(K^b(\fraka))$. We show that this can be done on the level of triangulated category. 
\end{remark}

\begin{proposition}\label{wd:ka}
$\lambda_{\Sigma}$ is well-defined on $K_0\bigl(K^{b}(\fraka)\bigr)$.
\end{proposition}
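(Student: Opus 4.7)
The plan is to extend $\lambda_\Sigma$ to $K^b(\fraka)$ by the very same formula~\eqref{lambda:eq} and then check that it factors through $K_0\bigl(K^b(\fraka)\bigr)$. Since $K^b(\fraka)$ is itself a Karoubian ${\mathbbm Q}-$linear tensor category, the idempotent $\pi^{V_\mu}\in {\mathbbm Q}\Sigma_n$ acting on $X^{\otimes n}$ has an image $S_\mu(X)$ in $K^b(\fraka)$ for every bounded complex $X$ and every partition $\mu$, so
\[
\lambda_\Sigma(X)=\sum_\mu \cl{}(S_\mu(X))\otimes \cl{}(V_\mu)\,t^{|\mu|}
\]
is a well-defined element of $K_0\bigl(K^b(\fraka)\bigr)_R[\![t]\!]^{\times}$ and extends ${\mathbbm Z}-$linearly to the free abelian group on isomorphism classes. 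What needs to be verified is the multiplicativity $\lambda_\Sigma(Y)=\lambda_\Sigma(X)\cdot\lambda_\Sigma(Z)$ for every distinguished triangle $X\to Y\to Z\to X[1]$ in $K^b(\fraka)$.

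First I would replace the triangle by an isomorphic one in which $Y$ is the mapping cone of some $g\colon Z[-1]\to X$, so that $Y^k=X^k\oplus Z^k$ in $\fraka$ for every $k$, the only role of $g$ being a twist of the differential. For every $n\geq 1$ and every degree $k$, the object $(Y^{\otimes n})^k$ of $\fraka$ then coincides with $\bigl((X\oplus Z)^{\otimes n}\bigr)^k$, and crucially the $\Sigma_n-$action on this graded piece, Koszul signs included, depends only on the degrees of the tensor factors, not on whether each factor sits in the $X$- or the $Z$-summand and not on the twisting $g$. Applying $\pi^{V_\mu}$ degree by degree therefore yields
\[
\bigl(S_\mu(Y)\bigr)^k=\bigl(S_\mu(X\oplus Z)\bigr)^k \text{ in } \fraka,
\]
where on the right $X\oplus Z$ denotes the ordinary direct sum of complexes with diagonal differential.

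Under the isomorphism $K_0(\fraka)\simeq K_0\bigl(K^b(\fraka)\bigr)$ the class of any bounded complex equals the alternating sum of the classes of its terms, so the identification above gives
\[
\cl{}\bigl(S_\mu(Y)\bigr)=\cl{}\bigl(S_\mu(X\oplus Z)\bigr) \text{ in } K_0\bigl(K^b(\fraka)\bigr).
\]
Lemma~\ref{thm:aw-cat} together with the decomposition~\eqref{def:schur-dec}, applied in the Karoubian tensor category $K^b(\fraka)$, provide
\[
S_\mu(X\oplus Z)\simeq \coprod_{\eta,\nu}\bigl(S_\eta(X)\otimes S_\nu(Z)\bigr)^{\oplus c^\mu_{\eta,\nu}},
\]
and substituting this into the previous identity and comparing coefficients of $t^n$ as in the proof of Lemma~\ref{glr-additive}, using the relation $\cl{}(V_\eta)\cl{}(V_\nu)=\sum_\mu c^\mu_{\eta,\nu}\cl{}(V_\mu)$ in $R$, delivers the required multiplicativity.

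The main obstacle will be the graded-piece identification at the heart of the second paragraph: one has to verify carefully that the Koszul sign rule makes the $\Sigma_n-$action on each fixed degree $(Y^{\otimes n})^k$ genuinely independent of the twisting $g$, so that the idempotent $\pi^{V_\mu}$ produces the same image in $\fraka$ as it does on $\bigl((X\oplus Z)^{\otimes n}\bigr)^k$. Once this sign bookkeeping is in place, the rest is formal and reduces to the additive computation already carried out in Lemma~\ref{glr-additive}.
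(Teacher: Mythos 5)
Your proof is correct and rests on the same key observation as the paper's, namely that forming $S_\mu$ of a bounded complex only sees the graded pieces (with their Koszul signs), not the differential. The paper packages this as the identity $\lambda_\Sigma(W)=\prod_n\lambda_\Sigma(W^n)^{(-1)^n}$ via the functor $\gr{S}\colon W\mapsto\coprod_n W^n[n]$, combined with the shift identity $\lambda_\Sigma(W[1])=\lambda_\Sigma(W)^{-1}$ from an earlier paper; you instead compare the mapping cone $Y$ with the direct sum $X\oplus Z$ degree by degree, obtaining $\bigl(S_\mu(Y)\bigr)^k\simeq\bigl(S_\mu(X\oplus Z)\bigr)^k$ and then passing to alternating sums in $K_0$. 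This is slightly more self-contained, since it bypasses the cited shift formula and reduces directly to the additive multiplicativity of Lemma~\ref{glr-additive}, but the underlying idea and reduction to standard triangles are the same.
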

\begin{proof}
We must show that if $\Delta:Y\to X\to Z\to Y[1]$ is a distinguished triangle in $K^{b}(\fraka)$, then
\[
\lambda_{\Sigma}(X)=\lambda_{\Sigma}(Y)\lambda_{\Sigma}(Z).
\]
It is clear that if $X\simeq X'$ in $K^{b}(\fraka)$, then $\lambda_\Sigma(X)=\lambda(X')$. By definition, we may assume that $\Delta$ is a standard distinguished triangle, i.e. $Z^n=Y^{n+1}\oplus X^n$ with standard differentials. It is then enough to show that for each bounded complex $X$ we have
\begin{equation}\label{trk-lambda-prod}
\lambda_{\Sigma}(X)=\prod_{n\in \Z}\lambda_{\Sigma}(X^n)^{(-1)^n}.
\end{equation}
To see this let $\gr{S}\colon C^{b}(\fraka)\to K^{b}(\fraka)$ be the functor of the associated graded object of the dummy filtration, i.e. 
\[
X\mapsto \coprod_{n\in \Z}X^n[n].
\]
Note that $\cl{}(X)=\cl{}(\gr{S}(X))$ and more generally for each partition $\pi$ we have $\cl{}(\gr{S}( S_\pi(X)))\simeq \cl{}(S_\pi(\gr{S}(X)))$. In particular $\lambda_\Sigma (X)=\lambda_\Sigma (\gr{S}(X))$. Now the result follows from~\cite{fdmtm}*{Proposition 3.2} and~\eqref{sum-inverse} that
\[
\lambda_\Sigma (X[1])=\lambda_\Sigma  (X)^{-1}.
\]
\end{proof}
As in the case of $K_0(\fraka)$ in~\eqref{def:lambda:eq:alt} define $\lambda$ on the Grothendieck ring of $K^{b}(\fraka)$ to $K_0\bigl(K^{b}(\fraka)\bigr)[\![t]\!]^{\times}$ by extending
\[
\lambda(\class{}{X})=\sum \cl{}(\Alt{n}{X})t^n.
\]
\begin{proposition}\label{trian-add-lambda}
Thus defined $\lambda$ gives $K_0\bigl(K^{b}(\fraka)\bigr)$ the structure of a special $\lambda-$ring and the isomorphism $K_0(\fraka)\to K_0\bigl(K^{b}(\fraka)\bigr)$ respects the $\lambda-$ring structures.
\end{proposition}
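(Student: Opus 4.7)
The plan is to deduce this proposition from Proposition~\ref{thm:special-additive} by transporting along the ring isomorphism $i_*\colon K_0(\fraka)\to K_0\bigl(K^b(\fraka)\bigr)$ furnished by~\cite{Gillet1996}*{3.2.1}, after first checking that $\lambda$ is a well-defined group homomorphism on $K_0\bigl(K^b(\fraka)\bigr)$.

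First I would observe that $\Alt{n}{X}=S_{(1^n)}(X)$, so that $\lambda(\cl{}(X))$ is exactly the image of $\lambda_\Sigma(\cl{}(X))$ under the $\Z$-linear projection $K_0(\fraka)_R\to K_0(\fraka)$ picking out the $\cl{}(V_{(1^n)})$-components. Since Proposition~\ref{wd:ka} gives the identity $\lambda_\Sigma([X])=\lambda_\Sigma([Y])\lambda_\Sigma([Z])$ for every distinguished triangle $Y\to X\to Z\to Y[1]$, the corresponding identity for $\lambda$ is extracted by computing the coefficient of $\cl{}(V_{(1^n)})t^n$ on the right hand side. Using Frobenius reciprocity together with the fact that the restriction of the sign representation to $\Sigma_p\times\Sigma_q$ is $V_{(1^p)}\otimes V_{(1^q)}$, one sees that $c^{(1^n)}_{\mu,\nu}=1$ precisely when $\mu=(1^p)$ and $\nu=(1^q)$ with $p+q=n$, and vanishes otherwise. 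This yields $\cl{}(\Alt{n}{X})=\sum_{p+q=n}\cl{}(\Alt{p}{Y}\otimes \Alt{q}{Z})$, so $\lambda$ is a well-defined group homomorphism from $K_0\bigl(K^b(\fraka)\bigr)$ to $1+tK_0\bigl(K^b(\fraka)\bigr)[\![t]\!]$.

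Next, since the embedding $i\colon \fraka\to K^b(\fraka)$ is a tensor functor preserving projectors, it commutes with Schur functors, and in particular $\Alt{n}{i(X)}=i(\Alt{n}{X})$. Hence $\lambda(i_*\cl{}(X))=i_*\lambda(\cl{}(X))$ for every $X\in\fraka$, so $i_*$ intertwines the $\lambda$-operations on classes coming from $\fraka$. Being a ring isomorphism, $i_*$ then transports the special $\lambda$-ring structure on $K_0(\fraka)$ guaranteed by Proposition~\ref{thm:special-additive} to a special $\lambda$-ring structure on $K_0\bigl(K^b(\fraka)\bigr)$.

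The main obstacle will be to confirm that this transported structure coincides with the directly defined $\lambda$ on general bounded complexes, not only on classes of objects concentrated in degree $0$. This should reduce to the multiplicative identity~\eqref{trk-lambda-prod} together with $\lambda_\Sigma(X[1])=\lambda_\Sigma(X)^{-1}$, both already established in the course of proving Proposition~\ref{wd:ka}; projecting onto the sign component yields the analogous identities for $\lambda$, which together with the Gillet--Soul\'e isomorphism force the two descriptions to agree and complete the argument.
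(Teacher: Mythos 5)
Your proposal is correct and follows essentially the same route as the paper's own proof: extract well-definedness of $\lambda$ by projecting the $\lambda_\Sigma$-identity from Proposition~\ref{wd:ka} onto the sign component, then use the Gillet--Soul\'e isomorphism $i_*$ together with~\eqref{trk-lambda-prod} to see that $\lambda\circ i_*=i_*\circ\lambda$, and finally transport the special $\lambda$-ring structure from Proposition~\ref{thm:special-additive}. You supply more explicit detail (the Littlewood--Richardson computation $c^{(1^n)}_{\mu,\nu}$ via Frobenius reciprocity, which the paper leaves implicit in its remark that the product of $V_{(n)^t}$-summands lands in the $V_{(n)^t}$-summand), but the argument is the same.
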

\begin{proof}
This is straightforward. As it is the case for $\lambda-$rings, any equation that we need, holds once it is checked for objects of the form $\class{}{X}$. Let $i_\ast$ be the isomorphism of $K_0$ groups induced by $\fraka\to K^b(\fraka)$. This induces an isomorphism on respective rings of power series, again denote by $i_\ast$. Note that $\lambda\circ i_\ast=i_\ast \circ \lambda$ by~\eqref{trk-lambda-prod}. That is $i_\ast$ respects the $\lambda$-ring structures. This and the additive case~\ref{thm:special-additive} shows that the structure is special.
\end{proof}
\begin{example}
Let $X$ be an object of the Karoubian ${\mathbbm Q}-$linear tensor category $\fraka$. It follows from~\ref{trian-add-lambda} that via the isomorphism $K_0(\fraka)\to K_0\bigl(K^b(\fraka)\bigr)$ we have
\[
\lambda (X)^{-1}=\lambda_{K^b(\fraka)}(X[1])=\sum \class{}{\Sym{n}{X}} (-t)^n.
\]
 
\end{example}

\section{Abelian categories and their derived categories}\label{sec:ab-der}
Now we consider the case of abelian categories and their derived categories. We let $\fraka$ be an essentially small abelian ${\mathbbm Q}-$linear tensor category, we assume that $\otimes$ is biexact. Rigid tensor abelian category in the sense of~\cite{saavedra}*{I, \S 5} is in our practices the most important case. 

Let $K_0(\fraka)$ be the Grothendieck group of $\fraka$; defined as a quotient of $K_0(\fraka_{\rm add})$, where $\fraka_{\rm add}$ is the underlying additive subcategory of $\fraka$, by the subgroup generated by $\class{}{Z}-\class{}{X}-\class{}{Y}$ for any exact sequence $0\to X\to Z\to Y\to 0$. Consider the assignment $X\mapsto \lambda_\Sigma (X)$ with the same definition as in the previous case~\eqref{lambda:eq}.
\begin{lemma}\label{thm:abelian-lambda}
$\lambda_\Sigma$ is well-defined on $K_0(\fraka)$, that is \[{\rm cl}\bigl(S_\pi(Y)\bigr)={\rm cl}\bigl(S_\pi(X\oplus Z)\bigr)\] in $K_0(\fraka)$ for any exact sequence $0\to X\to Y\to Z\to 0$ and any partition $\pi$.
\end{lemma}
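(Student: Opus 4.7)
The plan is to lift the argument of~\ref{glr-additive} from direct sums to arbitrary short exact sequences. The guiding idea is that if $0\to X\to Y\to Z\to 0$ is exact then $Y^{\otimes n}$ carries a canonical $\Sigma_n$-equivariant filtration whose associated graded matches, slot by slot, the natural $\Sigma_n$-equivariant decomposition of $(X\oplus Z)^{\otimes n}$; combined with the exactness of the Schur projectors $S_{V_\pi}$, this forces the desired identity in $K_0(\fraka)$.

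First I would pass through the Grothendieck group of $\rep{\fraka}{\Sigma_n}$, which is defined by~\ref{thm:rep-abelian}. The assignment $X\mapsto S_{V_\pi}(X)={\rm Im}(\pi^{V_\pi}_X)$ is an additive ${\mathbbm Q}$-linear functor $\rep{\fraka}{\Sigma_n}\to \fraka$, and it is exact: the idempotent $\pi^{V_\pi}\in {\mathbbm Q}\Sigma_n$ splits every $\Sigma_n$-object $X$ functorially as $S_{V_\pi}(X)\oplus {\rm Im}((1-\pi^{V_\pi})_X)$, so $S_{V_\pi}$ preserves kernels and cokernels. It therefore induces a group homomorphism $K_0(\rep{\fraka}{\Sigma_n})\to K_0(\fraka)$ sending $\cl{}(Y^{\otimes n})$ to $\cl{}(S_\pi(Y))$. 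So it suffices to establish the equality $\cl{}(Y^{\otimes n})=\cl{}\bigl((X\oplus Z)^{\otimes n}\bigr)$ in $K_0(\rep{\fraka}{\Sigma_n})$.

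Next, using biexactness of $\otimes$, I would construct the decreasing $\Sigma_n$-equivariant filtration $0=F^{n+1}\subseteq F^n\subseteq \cdots\subseteq F^0=Y^{\otimes n}$, where $F^k$ is the sum of the images in $Y^{\otimes n}$ of all natural maps $\bigotimes_i V_i\to Y^{\otimes n}$ indexed by subsets $I\subseteq \{1,\ldots,n\}$ with $|I|\geq k$, $V_i=X$ for $i\in I$ and $V_i=Y$ otherwise. The key identification is the $\Sigma_n$-equivariant isomorphism
\[
F^k/F^{k+1}\;\simeq\;\induced{\Sigma_n}{\Sigma_k\times \Sigma_{n-k}}{X^{\otimes k}\otimes Z^{\otimes(n-k)}},
\]
which I would verify first on the toy case $n=2$ (where the three subquotients work out to $X\otimes X$, $X\otimes Z\oplus Z\otimes X$ and $Z\otimes Z$) and then inductively on $n$ via $Y^{\otimes n}=Y^{\otimes(n-1)}\otimes Y$ using biexactness. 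Since the right-hand side is exactly the $\Sigma_n$-decomposition of $(X\oplus Z)^{\otimes n}$, summing the filtration contributions yields $\cl{}(Y^{\otimes n})=\cl{}\bigl((X\oplus Z)^{\otimes n}\bigr)$ in $K_0(\rep{\fraka}{\Sigma_n})$, and applying the homomorphism induced by $S_{V_\pi}$ gives the claim.

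The main obstacle I expect is the filtration step: constructing $F^\bullet$ and rigorously identifying its subquotients in an abstract abelian tensor category, where the usual combinatorial bookkeeping on tensor slots has to be replaced by manipulations of subobjects using only biexactness of $\otimes$ and the existence of finite sums and intersections. Deligne's analysis of $Y^{\otimes n}$ in~\cite{deligne-2002}*{\S 1}, already invoked in~\ref{glr-additive} and~\ref{thm:aw-cat}, supplies exactly this formal framework and should make the verification essentially mechanical once the filtration is in place.
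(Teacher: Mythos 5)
Your argument is essentially the paper's, made explicit: the paper invokes the filtration argument from Deligne~\cite{deligne-2002}*{1.19}, asserting directly that $S_\pi(Y)$ carries a finite filtration with associated graded $S_\pi(X\oplus Z)$, which is exactly what you obtain by applying the exact summand functor $S_{V_\pi}$ to your $\Sigma_n$-equivariant filtration $F^\bullet$ on $Y^{\otimes n}$. Your detour through $K_0(\rep{\fraka}{\Sigma_n})$ and the identification of the graded pieces as induced objects $\induced{\Sigma_n}{\Sigma_k\times\Sigma_{n-k}}{X^{\otimes k}\otimes Z^{\otimes(n-k)}}$ is a fleshed-out account of what the paper leaves to the citation, not a different route.
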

\begin{proof}
This follows from the method used in the proof of~\cite{deligne-2002}*{1.19}. More precisely $S_\pi(Y)$ has a finite increasing filtration whose object of graded pieces is $S_\pi(Y\oplus Z)$.
\end{proof}
\begin{remark}\label{rem:abelian-derived}
Let us denote by $i\colon \fraka\to D^b(\fraka)$ the natural tensor functor. 
The following diagram
\begin{equation*}\label{eq:abelian-derived}
{\xymatrix{
{\rm Obj}(\fraka)\ar[r]^-{i}\ar[d]^-{\lambda_\Sigma} & {\rm Obj}\bigl(D^b(\fraka)\bigr)\ar[d]^-{\lambda_\Sigma}\\
K_0(\fraka)[\![t]\!]\ar[r]^-{i_{\ast}} & K_0\bigl(D^b(\fraka)\bigr)[\![t]\!]
}}\raisetag{-100pt}
\end{equation*}
where $i_\ast:=K_0(i)$ is commutative.
\end{remark}

\begin{proposition}\label{thm:der-lambda}
Let $\fraka$ be an abelian ${\mathbbm Q}-$linear tensor category. Then $\lambda_{\Sigma}$ is well-defined on $K_0\bigl(D^{b}(\fraka)\bigr)$.
\end{proposition}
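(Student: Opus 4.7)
The plan is to reduce to the already-established result for $K^b(\fraka)$, Proposition~\ref{wd:ka}, via the natural triangulated tensor functor $\pi\colon K^b(\fraka) \to D^b(\fraka)$. The induced homomorphism $\pi_\ast\colon K_0(K^b(\fraka)) \to K_0(D^b(\fraka))$ is surjective; in fact the composite $K_0(\fraka) \to K_0(K^b(\fraka)) \to K_0(D^b(\fraka))$ is the classical Euler-characteristic isomorphism. Since $K_0(\fraka) \simeq K_0(K^b(\fraka))$ respects $\lambda$-ring structures by Proposition~\ref{trian-add-lambda}, it suffices to verify that $\lambda_\Sigma$ sends the kernel of $\pi_\ast$ to $1$. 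That kernel is generated by classes of bounded acyclic complexes: any identification $\cl{}(X) = \cl{}(X')$ arising in $K_0(D^b(\fraka))$ from a quasi-isomorphism $X \to X'$ is already captured in $K_0(K^b(\fraka))$ by the distinguished triangle whose mapping cone is acyclic. So the problem reduces to showing $\lambda_\Sigma(C) = 1$ in $K_0(D^b(\fraka))_R[\![t]\!]^\times$ for every bounded acyclic complex $C$.

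For this, equation~\eqref{trk-lambda-prod} from Proposition~\ref{wd:ka} gives
\[
\lambda_\Sigma(C) = \prod_{n\in\Z}\lambda_\Sigma(C^n)^{(-1)^n}
\]
in $K_0(K^b(\fraka))_R[\![t]\!]^\times$. Transported via the $\lambda$-isomorphism $K_0(\fraka) \simeq K_0(K^b(\fraka))$ and using the additivity of $\lambda_\Sigma$ on $K_0(\fraka)$ supplied by Lemma~\ref{thm:abelian-lambda}, this product rewrites as $\lambda_\Sigma\bigl(\sum_{n}(-1)^n\cl{}(C^n)\bigr)$. Now the short exact sequences $0 \to Z^n \to C^n \to Z^{n+1} \to 0$, where $Z^n = \ker d^n = \text{im}\,d^{n-1}$ by acyclicity, yield the telescoping identity $\sum_n (-1)^n\cl{}(C^n) = 0$ in $K_0(\fraka)$. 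Hence $\lambda_\Sigma(C) = \lambda_\Sigma(0) = 1$, as required.

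The chief obstacle is the first paragraph, and specifically the identification of the kernel of $\pi_\ast$; this is where the abelianness of $\fraka$ enters, providing truncations, the Euler-characteristic isomorphism $K_0(\fraka) \simeq K_0(D^b(\fraka))$, and the description of $D^b(\fraka)$ as a localization of $K^b(\fraka)$ at quasi-isomorphisms. Once this classical input is in hand, the remainder is a direct compatibility argument between the formula~\eqref{trk-lambda-prod} and the vanishing of the Euler characteristic of an acyclic complex in $K_0(\fraka)$.
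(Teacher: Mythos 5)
Your approach is correct, but it differs from the paper's. The paper's proof applies the canonical (cohomological) truncation $\gr{\tau}\colon Z\mapsto\coprod_n H^n(Z)[-n]$ directly to $D^b(\fraka)$; the key input is a $\Sigma_n$-equivariant K\"unneth isomorphism $\gr{\tau}(Z^{\otimes n})\simeq \gr{\tau}(Z)^{\otimes n}$, which yields the formula~\eqref{lambda-h}, $\lambda_\Sigma(Z)=\prod_n\lambda_\Sigma\bigl(H^n(Z)\bigr)^{(-1)^n}$. Given a distinguished triangle, the paper then feeds the long exact cohomology sequence (cut into short exact sequences) into Lemma~\ref{thm:abelian-lambda} to get $\lambda_\Sigma(Z)=\lambda_\Sigma(X)\lambda_\Sigma(Y)$. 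You instead leverage Proposition~\ref{wd:ka} and the dummy filtration formula~\eqref{trk-lambda-prod}, identifying $\ker\bigl(K_0(K^b(\fraka))\to K_0(D^b(\fraka))\bigr)$ with the subgroup generated by classes of bounded acyclic complexes, and then reduce to the telescoping identity $\sum(-1)^n\cl{}(C^n)=0$ in $K_0(\fraka)$. Both routes ultimately rest on Lemma~\ref{thm:abelian-lambda}, but yours avoids the K\"unneth/$\gr{\tau}$-equivariance input by delegating the hard work to Proposition~\ref{wd:ka}, and is arguably more formal. What the paper's route buys in exchange is the explicit cohomological expansion~\eqref{lambda-h}, which is reused in the proof of Proposition~\ref{der-lambda-special}; if you took your route, that later argument would need a separate derivation of~\eqref{lambda-h}. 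One small caveat: you should make explicit that $\pi_\ast$ really corresponds, under the Gillet--Soul\'e and Euler-characteristic isomorphisms, to the quotient map $K_0(\fraka_{\rm add})\to K_0(\fraka)$, whose kernel is by definition generated by $\cl{}(Z)-\cl{}(X)-\cl{}(Y)$ for short exact sequences; these generators are, up to sign, precisely the Euler classes of length-three acyclic complexes, which is why ``generated by acyclic complexes'' holds. As written that identification is asserted rather than argued.
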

\begin{proof}
The proof is similar to~\ref{wd:ka}. In this case consider $\gr{\tau}$ defined by
\[
X\mapsto \coprod_{n\in \Z} H^n(X)[-n].
\]
Note that for each complex $Z$ we have $\class{}{Z}=\class{}{\gr{\tau}Z}$ in $K_0\bigl(D^{b}(\fraka)\bigr)$. Moreover $\gr{\tau}(Z^{\otimes n})\simeq \gr{\tau}(Z)^{\otimes n}$ by standard K\"unneth formula. The simple but important observation is that this isomorphism is defined in $D^b(\fraka)$ and is in fact $\Sigma_n-$equivariant. See for example~\cite{kfttc} for a proof. By~\ref{rem:abelian-derived} we have 
\begin{equation}\label{lambda-h}
 \begin{split}
 \lambda_\Sigma (Z) & = \lambda_\Sigma ({\gr{\tau} Z})\\
		   & = \prod \lambda_\Sigma \bigl(H^n(Z)[-n]\bigr)\\
		   & = \prod \lambda_\Sigma \bigl(H^n(Z)\bigr)^{(-1)^n}.
 \end{split}
\end{equation}
Let $X\to Z\to Y\to X[1]$ be a distinguished triangle. It is clear that if $Z\simeq X\oplus Y$, then $\lambda_\Sigma (Z)=\lambda_\Sigma (X)\lambda_\Sigma (Y)$. For the general case consider the long exact sequence
\[
\cdots\to H^n(X)\to H^n(Z)\to H^n(Y)\to H^{n+1}(X)\to\cdots
\]
and divide it into short exact sequences. Use the above formula~\eqref{lambda-h} and~\ref{thm:abelian-lambda} to conclude that $\lambda_\Sigma (Z)=\lambda_\Sigma (X\oplus Y)=\lambda_\Sigma (X)\lambda_\Sigma (Y)$
\end{proof}
\begin{remark}
We have not used the standard presentation of a distinguished triangle in $D^b(\fraka)$ to prove~\ref{thm:der-lambda}. The proof above can be utilized in a slightly more general setting of tensor triangulated categories with compatible $t-$structure considered in~\cite{kfttc}.
\end{remark}

As in the previous cases consider the homomorphism $\lambda$ on $K_0\bigl(D^{b}(\fraka)\bigr)$ defined by linear extension of 
\begin{equation}\label{lambda:eq:alt}
\lambda(\cl{}(X))=\sum \cl{}(\Alt{n}{X})t^n.
\end{equation}
\begin{proposition}\label{der-lambda-special}
$\lambda$ defines a $\lambda-$ring structure on $K_0\bigl(D^{b}(\fraka)\bigr)$ and the diagram
\begin{equation}
{\xymatrix{
K_0(\fraka)\ar[r]^-{i_{\ast}}\ar[d]^-{\lambda} & K_0\bigl(D^b(\fraka)\bigr)\ar[d]^-{\lambda}\\
K_0(\fraka)[\![t]\!]\ar[r]^-{i_{\ast}} & K_0\bigl(D^b(\fraka)\bigr)[\![t]\!]
}}
\end{equation}
is a commutative diagram of special $\lambda-$rings.
\end{proposition}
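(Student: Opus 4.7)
The plan is to reduce everything to the additive case treated in Proposition~\ref{thm:special-additive}, via the natural map $i_\ast\colon K_0(\fraka)\to K_0\bigl(D^b(\fraka)\bigr)$. I first observe that $i_\ast$ is in fact a \emph{ring isomorphism}, the inverse being the Euler characteristic $\cl{}(Z)\mapsto \sum_n(-1)^n\cl{}\bigl(H^n(Z)\bigr)$, which is well-defined on $K_0(\fraka)$ because short exact sequences give cancelling alternating sums, and which is a two-sided inverse by the identity $\cl{}(Z)=\cl{}(\gr{\tau}Z)$ used in the proof of~\ref{thm:der-lambda}.

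Next, I verify that $\lambda$ defined by~\eqref{lambda:eq:alt} is an additive-to-multiplicative group homomorphism. For this I extract the sign-component of the identity $\lambda_\Sigma(x+y)=\lambda_\Sigma(x)\lambda_\Sigma(y)$ of Proposition~\ref{thm:der-lambda}: since $\Alt{n}{X}=S_{(1^n)}(X)$, the coefficient of $\cl{}(V_{(1^n)})t^n$ in $\lambda_\Sigma(X)$ equals the coefficient of $t^n$ in $\lambda(X)$. On the product side one invokes the Pieri-type vanishing $c^{(1^n)}_{\mu,\eta}=0$ unless $\mu=(1^p)$ and $\eta=(1^q)$ with $p+q=n$, in which case the coefficient equals $1$, recovering the classical formula $\Alt{n}{X\oplus Y}\simeq\coprod_{p+q=n}\Alt{p}{X}\otimes\Alt{q}{Y}$ at the level of $K_0$.

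Commutativity of the displayed square is then immediate: on classes $\cl{}(X)$ of objects $X\in\fraka$ both $i_\ast\circ\lambda$ and $\lambda\circ i_\ast$ send $\cl{}(X)$ to $\sum_n \cl{}\bigl(\Alt{n}{X}\bigr)t^n$, and additivity of both sides propagates this to all of $K_0(\fraka)$. Equivalently, this square is the sign-component of the commutative square in Remark~\ref{rem:abelian-derived}.

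The remaining specialness assertion is then formal: $i_\ast$ is a ring isomorphism intertwining the operations $\lambda^n$ on both sides, so the universal polynomial identities characterising a special $\lambda$-ring hold on $K_0\bigl(D^b(\fraka)\bigr)$ if and only if they hold on $K_0(\fraka)$, which is Proposition~\ref{thm:special-additive}. The main point demanding real attention is the ring-isomorphism property of $i_\ast$ (surjectivity already hinges on the $\gr{\tau}$-computation, and compatibility with tensor product uses the K\"unneth isomorphism already invoked in~\ref{thm:der-lambda}); once that is in place, the remainder is transport of structure.
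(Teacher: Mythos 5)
Your proof is correct and follows essentially the same strategy as the paper: transport the special $\lambda$-ring structure along the isomorphism $i_\ast\colon K_0(\fraka)\to K_0\bigl(D^b(\fraka)\bigr)$, using Proposition~\ref{thm:der-lambda} to guarantee that $\lambda$ (the sign-component of $\lambda_\Sigma$) is well-defined on the target, and Proposition~\ref{thm:special-additive} for specialness of the source. Where the paper is terse --- it just cites~\ref{thm:der-lambda} and the $\gr{\tau}$-decomposition $\lambda(X)=\prod_n\lambda(H^nX)^{(-1)^n}$ --- you make explicit the Littlewood--Richardson computation $c^{(1^n)}_{\mu,\eta}=\delta_{\mu,(1^p)}\delta_{\eta,(1^q)}$ underlying the passage from $\lambda_\Sigma$-additivity to $\lambda$-additivity, and you spell out the inverse of $i_\ast$ as the Euler characteristic $\class{}{Z}\mapsto\sum(-1)^n\class{}{H^n(Z)}$; these are exactly the implicit ingredients of the paper's argument, so the proposals differ in exposition, not substance.
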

\begin{proof}
Set $R:=K_0(\fraka)$ and $S:=K_0\bigl(D^{b}(\fraka)\bigr)$. From~\ref{thm:der-lambda} it follows that $\lambda_S\colon S\to S[\![t]\!]$ is well-defined and hence $\lambda$ gives a $\lambda-$ring structure to $S$. Note that by the proof of~\ref{thm:der-lambda} we can write $\lambda (X)$ as the power series product (i.e. sum in $\Lambda(S)$) of $\lambda(H^nX)^{(-1)^n}$. This immediately shows that $\lambda_S$ commutes with $\lambda$-structures on $S$ and $\Lambda (S)$. That is the diagram above is commutative. Finally note that
\[
\lambda_S (X\otimes Y)=\lambda_S(X)\ast\lambda_S (Y)
\]
where $\ast$ is the ring multiplication of $\Lambda(S):=1+tS[\![t]\!]$ and $\lambda_S\colon S\to \Lambda (S)$ is the $\lambda-$structure. Commutativity of the diagram proves the identity above.  
\end{proof}
\begin{remark}
Let $D$ be a ${\mathbbm Q}-$linear tensor triangulated category. We do not know whether $K_0(D)$ is a $\lambda-$ring with the usual definition of $\lambda$ as in~\eqref{lambda:eq:alt}. That is if the assignment~\eqref{lambda:eq} (or equivalently~\eqref{def:lambda:eq:alt}) is well-defined on $K_0(D)$. What the above shows is that this is the case for homotopy category of complexes, derived category of an abelian tensor category, and any category $D$ with compatible $t-$structure. More generally, a localization of tensor triangulated categories (when preserving tensor structure) should be considered. 
\end{remark}

%%%%%%%%%%%%%%%%%%%%%%%%%%%%%%%%%%%%%%%%%%%%%%%%%%%%%%%%%%%%%%%%%%%%%%%%%%%%%
\section{Representation rings of symmetric groups}\label{sec:rep-ring-ii}
%%%%%%%%%%%%%%%%%%%%%%%%%%%%%%%%%%%%%%%%%%%%%%%%%%%%%%%%%%%%%%%%%%%%%%%%%%%%%
In this section we consider the case of triangulated categories. Needless to say that additive and abelian categories can be treated likewise. We let $\fraka=D$ be an essentially small ${\mathbbm Q}$-linear tensor triangulated category. Consider $\rep{D}{\Sigma_n}$. This is by the above discussion a ${\mathbbm Q}$-linear tensor category. We define $K^D_0\bigl(\rep{D}{\Sigma_n}\bigr)$ to be the quotient of the free abelian group on the set of isomorphism classes of objects of $\rep{D}{G}$ by a subgroup generated by $\cl{}(Z)-\cl{}(X)-\cl{}(Y)$ for any distinguished triangle $X\to Z\to Y\to X[1]$ all whose vertices and morphisms are in $\rep{D}{\Sigma_n}$. We call such a triangles to be an exact sequence in $\rep{\fraka}{\Sigma_n}$. As in~\S\ref{sec:rep-ring-i} we let $R_n$ be the Grothendieck group of the abelian category $\rep{{\mathbbm Q}-{\rm fgmod}}{\Sigma_n}$. Define
\begin{equation}\label{def:h}
h\colon K_0(D)\otimes R_n\to K_0^{D}(\rep{D}{\Sigma_n})
\end{equation}
by $\cl{}(X)\otimes \cl{}(V)\mapsto \cl{}(V\otimes X)$.
\begin{proposition}\label{thm:h}
$h$ is an isomorphism of rings.
\end{proposition}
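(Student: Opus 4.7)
The plan is to construct an explicit inverse $\phi$ to $h$ using the isotypic projection functors $S_V$ of Lemma~\ref{thm:aw-cat}, by setting
\[
\phi(\cl{}(X))=\sum_{V\in Irr_n}\cl{}(S_V(X))\otimes \cl{}(V).
\]
I will verify in turn: well-definedness of $h$ on $K_0$; well-definedness of $\phi$ on $K_0$; that $h$ and $\phi$ are mutually inverse; and that $h$ respects multiplication.

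Well-definedness and bilinearity of $h$ are essentially immediate, since $V\otimes X$ is by construction a direct sum of $\dim V$ copies of $X$ on which $\Sigma_n$ acts through $V$, so any distinguished triangle in $D$ produces a termwise distinguished triangle in $\rep{D}{\Sigma_n}$. That $h$ is a ring homomorphism reduces to the natural $\Sigma_n$-equivariant isomorphism
\[
(V\otimes X)\otimes (V'\otimes X')\simeq (V\otimes V')\otimes (X\otimes X'),
\]
where the right-hand side carries the action through the diagonal representation $V\otimes V'$.

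The hard part is the well-definedness of $\phi$: one must show that $S_V\colon \rep{D}{\Sigma_n}\to D$ sends each exact sequence of $\rep{D}{\Sigma_n}$ to a distinguished triangle in $D$. Since $S_V(X)$ is the image of the natural idempotent endomorphism $\pi^V_X$, an exact sequence $X\to Y\to Z\to X[1]$ presents itself as a distinguished triangle in $D$ equipped with a commuting idempotent $(\pi^V_X,\pi^V_Y,\pi^V_Z)$. Karoubianness of $D$ splits this idempotent of triangles into a direct sum of two distinguished triangles, one of which is precisely $S_V(X)\to S_V(Y)\to S_V(Z)\to S_V(X)[1]$; this produces $\phi$ at the level of $K_0$.

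Finally, $h\circ\phi=\mathrm{id}$ is Lemma~\ref{thm:aw-cat} read in the Grothendieck group. For $\phi\circ h=\mathrm{id}$, bilinearity reduces the check to generators $\cl{}(X)\otimes \cl{}(V)$ with $V$ irreducible: since the action on $V\otimes X$ factors through $V$, we have $\pi^W_{V\otimes X}=\pi^W_V\otimes \id{X}$, so $S_W(V\otimes X)$ vanishes for $V\not\simeq W$ and is naturally isomorphic to $X$ for $V\simeq W$, yielding $\phi(\cl{}(V\otimes X))=\cl{}(X)\otimes \cl{}(V)$.
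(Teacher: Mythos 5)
Your proof is correct and follows essentially the same route as the paper: construct the inverse map via the isotypic summands $S_V$, check well-definedness on both sides using that $V\otimes\Delta$ is a finite direct sum of copies of $\Delta$ and that $S_V(\Delta)$ is a direct summand of $\Delta$, then verify mutual inversion from Lemma~\ref{thm:aw-cat} and the identity $S_W(V\otimes X)=X$ or $0$. You are, if anything, slightly more careful than the paper in two spots: the paper asserts tersely that a direct summand of a distinguished triangle is distinguished, while you spell out the Karoubian idempotent-splitting behind it; and you actually verify that $h$ respects multiplication via the $\Sigma_n$-equivariant isomorphism $(V\otimes X)\otimes(V'\otimes X')\simeq(V\otimes V')\otimes(X\otimes X')$, a step the paper's proof leaves implicit.
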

\begin{proof}
First we show that $h$ is well-defined. It is enough to show that the functor
\[
T\colon \rep{{\mathbbm Q}-{\rm fgmod}}{\Sigma_n}\times D \to \rep{D}{\Sigma_n},\quad (V,X)\mapsto V\otimes X
\]
respects exactness in both arguments. Respecting the exactness in the first arguments amounts to respecting finite direct sum and hence trivial. For the second argument, let $\Delta:X\to Z\to Y\to X[1]$ be an exact triangle and $V$ a finite dimensional ${\mathbbm Q}-$representation of $\Sigma_n$. The sequence
\[
V\otimes X\to V\otimes Z\to V\otimes Y\to V\otimes X[1]
\]
is a direct sum of $\dime{V}$ copies of $\Delta$ and hence exact. Therefore $h$ is well-defined. Next we define an inverse of $h$. Let $Irr_n$ be a finite set of isomorphic classes of irreducible ${\mathbbm Q}\Sigma_n$ modules $V$ such that $\cl{}(V)$'s form a basis of free ${\mathbbm Z}-$module $R_n$. Define
\[
g\colon K^D_0(\rep{D}{\Sigma_n})\to K_0(D)\otimes R_{\mathbbm Q}(\Sigma_n)
\]
by $\cl{}(X)\mapsto \sum \cl{}(S_{V} (X))\otimes\cl{}(V)$ where $S_V(X)$ is defined in~\eqref{def:s-lambda}. Let us show that $g$ is well-defined. Let $\Delta:X\to Z\to Y\to X[1]$ be an exact sequence in $\rep{D}{\Sigma_n}$. By definition $\Delta$ is distinguished in $D$. Note that with notations as in~\eqref{def:s-lambda} the functor $U\mapsto S_{V}(U)$ is nothing but the functor $\rm{Im}_D(\pi^V_U)$ for an idempotent $\pi^V\in {\mathbbm Q}\Sigma_n$ with $V=\pi^V{\mathbbm Q}\Sigma_n$. Therefore the sequence $S_{V} (\Delta)$ being a direct summand of $\Delta$ is exact in $D$. Therefore $\cl{}(S_{V} (Z))=\cl{}(S_{V} (X))+\cl{}(S_{V} (Y))$. This means that $g$ is well-defined. To prove the proposition it is enough to prove that $h\circ g$ and $g\circ h$ are respective identity maps. To see this note that for $V$ and $W$ in $Irr_n$ we have
\[
S_{V} (W\otimes X)=\begin{cases}X & \text{if }V=W\\0 &\text{otherwise}\end{cases} 
\]
therefore $g\circ h=\id{}$. Similarly note that $h\circ g=\id{}$ by~\ref{thm:aw-cat}.
\end{proof}
\begin{remark}\label{thm:h-abelian}
For an additive (resp. abelian) category $\fraka$, the group $K_0(\rep{\fraka}{G})$ is defined. The result above is valid in this case with a minor improvement in notations. 
\end{remark}

We define the ring of representation in $D$ of symmetric groups to be 
\[
R_{D}:=\coprod_{n\geq 0} K^D_0\bigl(\rep{D}{\Sigma_n}\bigr).
\]
Let $X_p$ (resp. $X_q$) be an object of $\rep{D}{\Sigma_p}$ (resp. $\rep{D}{\Sigma_q}$). Define the multiplication of their classes in $R_{D}$ by 
\[
\class{}{X_p}\class{}{X_q}:={\rm cl} \bigl({\rm Ind}^{\Sigma_{p+q}}_{\Sigma_p\times \Sigma_q}(X_p\otimes X_q)\bigr).
\]
\begin{lemma}
This defines a commutative associative unital ring structure on $R_D$.
\end{lemma}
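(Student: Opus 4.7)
The plan is to bootstrap the lemma from the classical case by summing Proposition~\ref{thm:h} over $n\geq 0$ to obtain a graded isomorphism of abelian groups
\[
h\colon K_0(D)\otimes_{\Z} R \xrightarrow{\sim} R_D,\qquad \cl{}(X)\otimes \cl{}(V)\mapsto \cl{}(V\otimes X),
\]
with $R=\coprod_{n\geq 0} R_n$ the classical representation ring of Example~\ref{ex:classic}. Since both $K_0(D)$ (tensor product on $D$) and $R$ (classical induction) are already known to be commutative associative unital rings, their tensor product $K_0(D)\otimes R$ is such a ring under the componentwise multiplication $(a\otimes u)(b\otimes v):=ab\otimes uv$. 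The whole lemma will then follow once I show that $h$ intertwines this multiplication with the induction multiplication on $R_D$.

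First I would verify that the multiplication on $R_D$ is a well-defined $\Z$-bilinear pairing on Grothendieck groups. Given a distinguished triangle $X_p'\to X_p\to X_p''\to X_p'[1]$ in $\rep{D}{\Sigma_p}$ and any $X_q$ in $\rep{D}{\Sigma_q}$, biexactness of $\otimes$ on $D$ promotes the tensor triangle to a distinguished triangle in $\rep{D}{\Sigma_p\times\Sigma_q}$. Induction from $\Sigma_p\times\Sigma_q$ to $\Sigma_{p+q}$, defined via~\eqref{def:ind} after decomposing inputs by Lemma~\ref{thm:aw-cat}, reduces on each summand $V\otimes Y$ to tensoring $Y$ with the finite-dimensional $\mathbb{Q}$-representation $\text{Ind}^{\Sigma_{p+q}}_{\Sigma_p\times\Sigma_q}(V)$, i.e.\ to a finite direct sum of copies of $Y$; it therefore preserves distinguished triangles and is additive on classes. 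Symmetric reasoning in the second variable gives bilinearity.

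Next I would check compatibility of $h$ with multiplication on generators. For $X_p,X_q$ in $D$ and irreducible $\mathbb{Q}$-representations $V$ of $\Sigma_p$ and $W$ of $\Sigma_q$, the canonical $\Sigma_p\times\Sigma_q$-equivariant identification $(V\otimes X_p)\otimes (W\otimes X_q)\simeq (V\otimes W)\otimes (X_p\otimes X_q)$ combined with~\eqref{def:ind} gives
\[
\text{Ind}^{\Sigma_{p+q}}_{\Sigma_p\times\Sigma_q}\bigl((V\otimes X_p)\otimes (W\otimes X_q)\bigr)\simeq \text{Ind}^{\Sigma_{p+q}}_{\Sigma_p\times\Sigma_q}(V\otimes W)\otimes (X_p\otimes X_q),
\]
whose class in $R_D$ equals $h\bigl((\cl{}(X_p)\cl{}(X_q))\otimes(\cl{}(V)\cl{}(W))\bigr)$. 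Combined with the bilinearity established above, this shows $h$ is a ring isomorphism, and commutativity, associativity, and the unital property of $R_D$ transfer from the corresponding properties on $K_0(D)\otimes R$, with unit $\cl{}(\mathbb{1})$ lying in degree zero under $K_0(D)\otimes R_0\simeq K_0(D)$.

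The main obstacle is the first step: the definition~\eqref{def:ind} of induction is initially given only for objects of the special form $V\otimes Y$, and one must verify that its extension via Lemma~\ref{thm:aw-cat} to an additive functor on all of $\rep{D}{\Sigma_p\times\Sigma_q}$ is both natural and triangle-preserving. Once the Karoubian structure is invoked to identify each $S_V$ as a direct summand of the identity functor, this reduces to the elementary fact that taking finite direct sums of copies preserves distinguished triangles, after which the remaining verifications are formal.
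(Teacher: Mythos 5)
Your proof is correct, but takes a genuinely different route from the paper. The paper verifies commutativity and associativity directly on object classes by exhibiting the $\Sigma_{p+q}$-equivariant isomorphisms $\text{Ind}^{\Sigma_{p+q}}_{\Sigma_p\times\Sigma_q}(X\otimes Y)\simeq\text{Ind}^{\Sigma_{p+q}}_{\Sigma_q\times\Sigma_p}(Y\otimes X)$ induced by the commutativity constraint, citing Knutson's classical argument (and similarly for associativity); well-definedness is handled by asserting biexactness of $\text{Ind}$. You instead transport the entire ring structure across the graded isomorphism $h\colon K_0(D)\otimes R\to R_D$ obtained by summing Proposition~\ref{thm:h}, after checking that $h$ intertwines the two multiplications on generators and that the product on $R_D$ descends to $K_0$. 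Your route has the advantage of reducing the ring axioms to the known classical ring $R$ and the tensor ring $K_0(D)$, and it simultaneously establishes what the paper later observes separately — that $H$ is a ring isomorphism — so there is no circularity (Proposition~\ref{thm:h} concerns only the internal product on each $K_0^D(\rep{D}{\Sigma_n})$, not the graded induction product). The paper's argument is more self-contained and shorter on the page; yours makes the verification of the multiplicativity of $h$ the central computation and is arguably cleaner to extend. Both approaches hinge on the same nontrivial input: biexactness of the extended induction functor, which you correctly reduce — via Lemma~\ref{thm:aw-cat} and the fact that each $S_V$ is a direct summand of the identity — to preservation of triangles under finite direct sums.
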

\begin{proof}
Fix integer $p$ and $q$, let $n=p+q$ and consider the assignment
\begin{align*}
{\rm Ind}^{\Sigma_n}_{\Sigma_p\times \Sigma_q}\colon \rep{D}{\Sigma_p}\times \rep{D}{\Sigma_q}& \to \rep{D}{\Sigma_n}
\end{align*}
This is biexact. Therefore the multiplication on $K_0$'s is well-defined. We show that it is commutative and associative. Let $X$ (resp. $Y$) be a $\Sigma_p$ (resp. $\Sigma_q$) module object. It is enough to show that commutativity constraint $\tau\colon X\otimes Y\to Y\otimes X$ gives an $\Sigma_{p+q}-$isomorphism
\[
{\rm Ind}^{\Sigma_{p+q}}_{\Sigma_p\times \Sigma_q}(X\otimes Y)\simeq {\rm Ind}^{\Sigma_{p+q}}_{\Sigma_q\times \Sigma_p}(Y\otimes X).
\]
The proof of this is identical to the classical case~\cite[III, p. 128]{knutson}. Associativity is proved similarly.
\end{proof}
Next we want to define a $\lambda-$ring structure on $R_D$. For this note that the direct sum of all isomorphisms in~\ref{thm:h} corresponding to different $n$'s gives an isomorphism
\[
H\colon R\otimes K_0(D) \to R_D, \quad \class{}{V}\otimes \class{}{X}\mapsto \class{}{V\otimes X}.
\]
We immediately note that by definitions~\eqref{def:ind} after~\ref{thm:aw-cat} $H$ is an isomorphism of rings. For the rest of this section assume that $\lambda$ in~\eqref{def:lambda:eq:alt} is well defined on $K_0(D)$. For example $D$ can be taken as one of those in~\ref{trian-add-lambda} or~\ref{der-lambda-special}. Therefore we obtain a $\lambda-$ring structure on $R_D$. Note that the $\lambda-$ring structure on $R\otimes K_0(D)$ is the natural one; for $r\in R$ and $x\in K_0(D)$
\[
\lambda (r\otimes x)=\lambda (r)\otimes 1\ast 1\otimes \lambda (x).
\]
Here we have denoted for example the embedding $\Lambda (R)\to \Lambda (R\otimes K_0(D))$ by $f\mapsto f\otimes 1$. Collecting all these together we have

\begin{theorem}\label{thm:fundamental}
There is a natural isomorphism of special $\lambda-$rings
\[
R_D\simeq R\otimes_{\Z}K_0(D).
\]
\end{theorem}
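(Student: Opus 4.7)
The plan is to assemble the results of the preceding discussion. The additive isomorphism
\[
H\colon R \otimes_{\Z} K_0(D) \to R_D, \qquad \class{}{V} \otimes \class{}{X} \mapsto \class{}{V \otimes X},
\]
is obtained by summing over $n \geq 0$ the isomorphisms of~\ref{thm:h}. To check that $H$ is a ring homomorphism, I would compare the two multiplications on bihomogeneous generators; concretely, for ${\mathbbm Q}$-representations $V_p$ of $\Sigma_p$ and $V_q$ of $\Sigma_q$ and objects $X, Y$ of $D$, the isomorphism
\[
{\rm Ind}^{\Sigma_{p+q}}_{\Sigma_p \times \Sigma_q}(V_p \otimes V_q) \otimes (X \otimes Y) \simeq {\rm Ind}^{\Sigma_{p+q}}_{\Sigma_p \times \Sigma_q}\bigl((V_p \otimes X) \otimes (V_q \otimes Y)\bigr)
\]
in $\rep{D}{\Sigma_{p+q}}$ is exactly the content of~\eqref{def:ind}. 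Naturality of $H$ in $D$ is transparent from its description in terms of the tensor functor $(V, X) \mapsto V \otimes X$ and the isotypic decomposition~\ref{thm:aw-cat}.

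Next I would observe that the $\lambda$-ring structure on $R_D$ was introduced just above by transport along $H$ of the natural tensor-product $\lambda$-structure on $R \otimes_{\Z} K_0(D)$; consequently $H$ is tautologically a morphism of $\lambda$-rings. What remains is specialness. Here $R$ is a special $\lambda$-ring by the classical identification $R \simeq \Lambda$ of~\ref{ex:classic}, with $\Lambda$ being torsion-free as a polynomial ring on the power sums; and $K_0(D)$ is a special $\lambda$-ring by our standing assumption together with~\ref{trian-add-lambda} and~\ref{der-lambda-special}. Invoking the standard fact that the tensor product of two special $\lambda$-rings, one of which is torsion-free, is itself special, the right-hand side acquires a special $\lambda$-structure, and this structure transports to $R_D$ via $H$.

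The main potential obstacle is the specialness of the tensor product; this reduces to the observation that the universal polynomials governing the identities $\lambda^n(xy) = P_n(\lambda^i x, \lambda^j y)$ and $\lambda^n(\lambda^m x) = P_{n,m}(\lambda^i x)$ are built componentwise from the values on $x \otimes 1$ and $1 \otimes y$, so the verification decomposes along the two tensor factors. I would invoke this fact from the general theory of $\lambda$-rings rather than reprove it here, so that the proof proper consists only of identifying $H$ as the desired isomorphism.
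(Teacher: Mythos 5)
Your proof takes the same route as the paper's (largely implicit) argument preceding the theorem: sum the isomorphisms of~\ref{thm:h} to get the ring isomorphism $H$, transport the natural tensor-product $\lambda$-structure from $R\otimes_{\Z}K_0(D)$ to $R_D$ so that $H$ is tautologically a $\lambda$-morphism, and appeal to specialness of each factor. Your explicit justification of specialness of the tensor product via torsion-freeness of $R\simeq\Lambda$ (the free special $\lambda$-ring on one generator, cf.~\ref{ex:classic}) supplies a detail the paper compresses into ``Collecting all these together.''
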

\begin{example}
Let $D=D^b(\fraka)$ be the bounded derived category of a tensor abelian category over a field $k$ of characteristic zero. There is a natural isomorphism 
\[
\xi\colon K_0^D\bigl(\rep{D}{\Sigma_n}\bigr)\to K_0\bigl(\rep{\fraka}{\Sigma_n}\bigr)
\]
defined by $\class{}{X}\mapsto \sum (-1)^n\class{}{H^n(X)}$. Therefore for $\fraka$ a result similar to the above holds.
\end{example}
\begin{remark}
If $\fraka$ is merely additive, then in view of~\ref{thm:h-abelian} a result similar to~\ref{thm:fundamental} holds.
\end{remark}

The above implies that $\lambda_\Sigma$ in~\eqref{lambda:eq} gives a homomorphism $\mu$ defined by the composite
\[
K_0(\fraka)\xrightarrow{\lambda_\Sigma}{} 1+tK_0(\fraka)\otimes R_[\![t]\!]\xrightarrow{\Lambda(H)}{} \Lambda(R_\fraka)
\]
By the above results and~\ref{glr-additive} we conclude that $\mu$ is an injective homomorphism of abelian groups. From the definitions we have
\begin{lemma} $\mu(\class{}{X})=\sum_n \class{}{X^{\otimes n}}t^n$.
\end{lemma}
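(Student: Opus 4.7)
The plan is to unwind the definition of $\mu$ and match the result term by term with $\sum_n \class{}{X^{\otimes n}}t^n$ using the canonical isotypic decomposition from Lemma~\ref{thm:aw-cat}.

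First I would evaluate $\lambda_\Sigma(\class{}{X})$ according to~\eqref{lambda:eq} to obtain
\[
\lambda_\Sigma(\class{}{X}) \;=\; \sum_\mu \class{}{S_\mu(X)}\otimes \class{}{V_\mu}\, t^{|\mu|} \;\in\; 1 + t\bigl(K_0(\fraka)\otimes R\bigr)[\![t]\!].
\]
Since $\Lambda(H)$ is defined by applying $H$ to each coefficient, and $H\bigl(\class{}{Y}\otimes \class{}{V}\bigr) = \class{}{V\otimes Y}$ by the construction preceding Theorem~\ref{thm:fundamental}, this gives
\[
\mu(\class{}{X}) \;=\; \sum_\mu \class{}{V_\mu\otimes S_\mu(X)}\, t^{|\mu|}.
\]

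Next I would collect the coefficient of $t^n$, which is $\sum_{|\pi|=n}\class{}{V_\pi\otimes S_\pi(X)}$ in $K_0^{\fraka}\bigl(\rep{\fraka}{\Sigma_n}\bigr)$. I then apply Lemma~\ref{thm:aw-cat} to the $\Sigma_n$-object $T_n(X) = X^{\otimes n}$ of Example~\ref{ex:spi}: combining the convention $S_{V_\pi}(T_n(X)) = S_\pi(X)$ with the natural $\Sigma_n$-isomorphism there, we obtain
\[
X^{\otimes n} \;\simeq\; \coprod_{|\pi|=n} V_\pi\otimes S_\pi(X)
\]
in $\rep{\fraka}{\Sigma_n}$. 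Passing to classes in the Grothendieck group yields that the coefficient of $t^n$ in $\mu(\class{}{X})$ is precisely $\class{}{X^{\otimes n}}$, which is the claim.

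There is no serious obstacle here: the entire argument is a matter of reconciling the bookkeeping in $\lambda_\Sigma$ (which records the isotypic components of $X^{\otimes n}$ through the pairing of $S_\mu(X)$ with $\class{}{V_\mu}$) against the grading of $R_\fraka$ provided by the isomorphism $H$. The only point that deserves care is that the isomorphism in Lemma~\ref{thm:aw-cat} must be read as $\Sigma_n$-equivariant, so that the resulting identity holds in $K_0^{\fraka}\bigl(\rep{\fraka}{\Sigma_n}\bigr)$ and not merely in $K_0(\fraka)$; this is exactly what is asserted in that lemma.
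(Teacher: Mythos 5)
Your proof is correct and is exactly the argument the paper has in mind when it writes ``From the definitions we have'' before the lemma: unwind $\lambda_\Sigma$, push through $\Lambda(H)$ coefficient-wise, and then reassemble the coefficient of $t^n$ into $\class{}{X^{\otimes n}}$ via the $\Sigma_n$-equivariant isotypic decomposition of Lemma~\ref{thm:aw-cat} together with the convention $S_{V_\pi}(X^{\otimes n})=S_\pi(X)$ of Example~\ref{ex:spi}. Your closing remark that the isomorphism of Lemma~\ref{thm:aw-cat} must be read as $\Sigma_n$-equivariant is precisely the point that makes the identity hold in the Grothendieck group of $\rep{\fraka}{\Sigma_n}$ rather than only after forgetting the action.
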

Note that in general situations $\mu(\class{}{X})(1-\class{}{X}t)\neq 1$.

%%%%%%%%%%%%%%%%%%%%%%%%%%%%%%%%%%%%%%%%%%%%%%%%%%%%%%%%%%%%%%%%%%%%%%%%%%%%%
%%%%%%%%%%%%%%%%%%%%%%%%%%%%%%%%%%%%%%%%%%%%%%%%%%%%%%%%%%%%%%%%%%%%%%%%%%%%%
\section{Characteristic series of general group representations}
For any ring $A$ let $FC(G, A)$ be the ring of central functions from $G$ to $A$, that is functions $t\colon G\to A$ with $t(gh)=t(hg)$ and where ring structure on $FC(G, A)$ is defined value-wise. If $A$ is a (special) $\lambda$ ring, then $FC(G, A)$ is also a (special) $\lambda$ ring with value-wise definitions $\lambda^i(f)(g)=\lambda^i(f(g))$.

Let $\fraka$ be a ${\mathbbm Q}-$linear tensor category as in~\S\ref{sec:rep-ring-i} in which every object has a dual. In such a (rigid) category there is defined a ${\mathbbm Q}$-linear trace
\[
{\rm tr}\colon {\rm End}(X)\to {\rm End}({\mathbbm 1})\quad f\mapsto {\rm tr}(f;X).
\]
See~\cite[1.16]{deligne-2002}. Now let $G$ be a group (or monoid) and consider $\rep{\fraka}{G}$. As in the classical case define $\overline{\chi}\colon K_0\bigl(\rep{\fraka}{G}\bigr)\to FC(G,\Lambda(k))$ by
\begin{equation}\label{def:characteristic}
\class{}{X}\mapsto \overline{\chi}_X:=\bigl(g\mapsto \sum_n \trace{g;\Alt{n}{X}}t^n\bigr) 
\end{equation}
\begin{proposition}\label{thm:characteristic}
$\overline{\chi}$ is a $\lambda-$homomorphism of special $\lambda-$rings.
\end{proposition}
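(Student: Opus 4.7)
The strategy is to verify the required compatibilities through the ``ghost components'' of $\Lambda(k) = 1 + t\,k[\![t]\!]$. For $x \in \Lambda(k)$, let $\gamma_m(x) \in k$ be the $m$-th ghost: formally, if $x = \prod(1 + a_j t)$, then $\gamma_m(x) = \sum_j a_j^m$. Since $k$ is a ${\mathbbm Q}$-algebra, the ghost map $\Lambda(k) \to \prod_{m \geq 1} k$ is an injective $\lambda$-ring homomorphism (with componentwise target structure), so identities in $\Lambda(k)$ may be checked ghost-wise.

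The linchpin, which I would establish first, is the trace formula
\[
\trace{g;\Alt{n}{X}} = \frac{1}{n!}\sum_{\sigma \in \Sigma_n} \text{sgn}(\sigma) \prod_{k \geq 1} \trace{g^k;X}^{c_k(\sigma)},
\]
where $c_k(\sigma)$ is the number of $k$-cycles of $\sigma$. To obtain it I would realise $\Alt{n}{X}$ as the image of the sign idempotent $\pi_{\mathrm{sgn}} \in {\mathbbm Q}\Sigma_n$ acting on $X^{\otimes n}$, use $\trace{g; \mathrm{Im}(\pi)} = \trace{g \circ \pi; X^{\otimes n}}$ for a $g$-commuting idempotent, and evaluate $\trace{g^{\otimes n} \circ \sigma; X^{\otimes n}}$ by cycle-reduction via cyclicity and tensor-multiplicativity of trace in the rigid category $\fraka$ (standard, cf.~\cite{deligne-2002} \S1). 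Recognising the right-hand side as the Newton--Girard expression of $e_n$ in the power sums $p_k := \trace{g^k;X}$ yields
\[
\gamma_m\bigl(\overline{\chi}_X(g)\bigr) = \trace{g^m;X} \quad \text{for every } m \geq 1.
\]

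Everything else is then a formal consequence. Well-definedness on $K_0$ and additivity $\overline{\chi}_{X \oplus Y} = \overline{\chi}_X \cdot \overline{\chi}_Y$ (power-series product, i.e.\ addition in $\Lambda(k)$) follow from $\trace{g^m; X \oplus Y} = \trace{g^m; X} + \trace{g^m; Y}$ together with Lemma~\ref{thm:abelian-lambda} when $\fraka$ is abelian. Centrality $\overline{\chi}_X(gh) = \overline{\chi}_X(hg)$ is cyclicity. Ring-multiplicativity $\overline{\chi}_{X \otimes Y} = \overline{\chi}_X \ast \overline{\chi}_Y$ reduces ghost-wise to $\trace{g^m; X \otimes Y} = \trace{g^m; X} \cdot \trace{g^m; Y}$, which is tensor-multiplicativity of trace applied to the diagonal $G$-action on $X \otimes Y$. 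Lastly, $\lambda$-compatibility $\overline{\chi}_{\Alt{i}{X}} = \lambda^i \overline{\chi}_X$ follows by the same ghost method together with the universal polynomials defining $\lambda^i$ on $\Lambda(k)$; equivalently, one verifies commutativity with Adams operations, which on ghosts reads $\gamma_m(\psi^n x) = \gamma_{nm}(x)$ and produces the identity $\overline{\chi}_{\psi^n X}(g) = \overline{\chi}_X(g^n)$ announced in the introduction. The main obstacle is the displayed trace formula; everything else is a formal application of Newton--Girard in the universal $\lambda$-ring $\Lambda(k)$.
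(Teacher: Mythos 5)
Your proof is correct, but it takes a genuinely different and more computational route than the paper. The paper's argument is entirely categorical: it first observes that $\class{}{X}\mapsto(g\mapsto\trace{g;X})$ is a ring homomorphism $\mathrm{tr}_G\colon K_0(\rep{\fraka}{G})\to FC(G,k)$, then invokes the already-established fact that $K_0(\rep{\fraka}{G})$ is a special $\lambda$-ring (so that $\lambda\colon K_0(\rep{\fraka}{G})\to\Lambda(K_0(\rep{\fraka}{G}))$ is itself a $\lambda$-homomorphism), and finally identifies $\overline{\chi}$ with the composite $\Lambda(\mathrm{tr}_G)\circ\lambda$, which is a $\lambda$-homomorphism by functoriality of the $\Lambda$-construction. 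Your proposal instead works directly at the level of ghost components of $\Lambda(k)$, establishing the cycle-type trace formula $\trace{g;\Alt{n}{X}}=\tfrac{1}{n!}\sum_\sigma\mathrm{sgn}(\sigma)\prod_k\trace{g^k;X}^{c_k(\sigma)}$ via the sign idempotent and Deligne's cycle-reduction for traces in a rigid tensor category, then reading off $\gamma_m(\overline{\chi}_X(g))=\trace{g^m;X}$ and verifying every required identity ghost-wise (using that $k$ is a ${\mathbbm Q}$-algebra so the ghost map is injective). The trade-off: the paper's proof is shorter and more modular, deferring all computation elsewhere; yours front-loads the computation but, as a side benefit, produces the Adams-operation formula $\overline{\chi}_{\psi^n X}(g)=\overline{\chi}_X(g^n)$ en route, which the paper proves separately as Theorem~\ref{thm:adams-g}. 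One small point worth making explicit: your approach tacitly presupposes that the $\lambda$-ring structure on $K_0(\rep{\fraka}{G})$ is well-defined (a nontrivial matter settled in the paper's earlier sections for the abelian/derived/homotopy cases), and well-definedness of $\overline{\chi}$ on $K_0$ in the triangulated setting requires additivity of trace over distinguished triangles, not just Lemma~\ref{thm:abelian-lambda}; but granting those, the ghost argument goes through.
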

\begin{proof}
This is almost trivial from discussions above. Namely, first note that for general $\fraka':=\rep{\fraka}{G}$ the assignment
\[
\class{}{X}\mapsto \bigl(g\mapsto \trace{g;X}\bigr)
\]
is a ring homomorphism denoted ${\rm tr}_G\colon K_0(\fraka')\to FC(G,k)$. For example note that the action of $G$ on $X\otimes Y$ is nothing but $g\mapsto g_X\otimes g_Y$ and hence $\trace{g;X\otimes Y}$ equals  $\trace{g;X}\trace{g;Y}$. By the previous results the homomorphism
\[
\lambda\colon K_0(\fraka')\to 1+tK_0(\fraka')[\![t]\!]=\Lambda (K_0(\fraka'))
\]
is a $\lambda-$homomorphism of $\lambda-$rings. By functoriality of $A\mapsto \Lambda (A)$ we conclude that $\Lambda({\rm tr}_G)\circ \lambda$ is a $\lambda-$homomorphism of $\lambda-$rings. But this composition is nothing but $\overline{\chi}$. 
\end{proof}

For any object $X$ define $d_X:=\trace{\id{};X}\in {\rm End}_{\fraka}({\mathbbm 1})$. For any partition $\pi$ of $n=|\pi|$ we let $d_\pi$ be the polynomial whose value at an integer $m$ is exactly the dimension of $S_\pi(W)$ where $W$ is a ${\mathbbm Q}-$vector space of dimension $m$. The non-trivial fact is that due to Weyl character formula such a polynomial exists.
\begin{lemma}\label{thm:trace-id-schur} For any object $X$ and any partition $\pi$ we have
\[
\trace{\id{}; S_\pi(X)}=d_{\pi}(d_X).
\]
\end{lemma}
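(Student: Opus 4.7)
My plan is to compute the left-hand side by reducing to the ambient tensor power $X^{\otimes n}$ and then to invoke a polynomial identity whose validity is forced by the classical case $\fraka = {\mathbbm Q}\text{-fgmod}$. By the definition of $S_\pi$ (see Example~\ref{ex:spi} together with~\eqref{def:s-lambda}), the object $S_\pi(X)$ is the image of the idempotent endomorphism $\pi^{V_\pi}_{X^{\otimes n}}$. Writing this idempotent as $p=i\circ r$ with $r\circ i=\id{S_\pi(X)}$ and using cyclicity of the trace, one obtains
\[
\trace{\id{};S_\pi(X)}=\trace{\pi^{V_\pi}_{X^{\otimes n}};X^{\otimes n}}.
\]
Expanding $\pi^{V_\pi}=\sum_{\sigma\in \Sigma_n}c_\sigma \sigma$ in ${\mathbbm Q}\Sigma_n$ and using ${\mathbbm Q}$-linearity of the trace reduces everything to computing $\trace{\sigma_{X^{\otimes n}};X^{\otimes n}}$ for each individual $\sigma\in\Sigma_n$.

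The pivotal step is to establish the universal trace formula
\[
\trace{\sigma_{X^{\otimes n}};X^{\otimes n}}=d_X^{c(\sigma)},
\]
where $c(\sigma)$ denotes the number of cycles of $\sigma$. Since $\sigma$ is, via its cycle decomposition, the tensor product of cyclic-permutation operators acting on disjoint tensor sub-powers of $X$, multiplicativity of the trace under tensor products reduces the claim to the case of a single $k$-cycle on $X^{\otimes k}$, where cyclicity of the trace combined with the standard expressions for evaluation and coevaluation (see~\cite[1.16, 1.17]{deligne-2002}) gives exactly $d_X$.

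Combining these two inputs yields
\[
\trace{\id{};S_\pi(X)}=\sum_\sigma c_\sigma\,d_X^{c(\sigma)}=:P_\pi(d_X),
\]
where $P_\pi(T)\in{\mathbbm Q}[T]$ depends only on the chosen idempotent $\pi^{V_\pi}$, not on $\fraka$ or $X$. To identify $P_\pi$ with $d_\pi$, I would specialize to $\fraka={\mathbbm Q}\text{-fgmod}$ with $X=W$ a vector space of dimension $m$: the same identity then reads $d_\pi(m)=\dim S_\pi(W)=P_\pi(m)$ for every non-negative integer $m$, and two polynomials agreeing on infinitely many integers coincide, whence $P_\pi=d_\pi$. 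The main obstacle I anticipate is the clean justification of the universal trace formula for $\sigma_{X^{\otimes n}}$, which is folkloric in rigid symmetric tensor categories but requires a little care with the symmetric-monoidal diagrammatics; everything else is routine bookkeeping with idempotents followed by polynomial interpolation.
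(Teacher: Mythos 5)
Your proof is correct and follows essentially the same route as the paper: reduce to traces $\trace{\sigma;X^{\otimes n}}$, invoke the fact (from Deligne) that such a trace equals $d_X^{c(\sigma)}$ with $c(\sigma)$ the number of cycles, deduce a universal polynomial $P$ with $\trace{\id{};S_\pi(X)}=P(d_X)$, and identify $P=d_\pi$ by specializing to ${\mathbbm Q}$-vector spaces. The only difference is that you sketch the proof of the cycle-count trace formula via multiplicativity and cyclicity, whereas the paper simply cites~\cite[7.1.2]{Deligne1990}.
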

\begin{proof} (Based on~\cite[7.1.2]{Deligne1990}). For $n=|\pi|$, we have 
\[
\trace{\id{}; S_\pi(X)}=\frac{1}{n!}\sum_{\sigma\in \Sigma_n} a^\pi_\sigma \trace{\sigma; X^{\otimes n}}
\]
for certain $a^\pi_\sigma\in {\mathbbm Z}$. By~\cite{Deligne1990} we know that $\trace{\sigma; X^{\otimes n}}$ is a power of $\trace{\id{}; X}$ by the number of disjoint-cyclic decomposition of $\sigma$. Therefore there is a polynomial in ${\mathbbm Q}[U]$ independent from $\fraka$ (and $G$ and $X$) such that $\trace{\id{}; S_\pi(X)}=P(\trace{\id{};X})$. Using this for $\fraka$ the category of finitely generated ${\mathbbm Q}-$modules and $G=1$ we see that $P(m)=d_\pi(m)$ for all positive integers $m$. We obtain the result.
\end{proof}
\begin{example}
In particular the above result shows that in general
\[
\overline{\chi}_X(1)=1+d_Xt+\cdots+\binom{d_X}{n}t^n+\cdots.
\]
\end{example}
\begin{example}\label{ex:character-trace}
Let $V$ be a finite dimensional (virtual) ${\mathbbm Q}-$representation of $\Sigma_n$, i.e. an element of $R_n$. Applying the morphism ${\rm ev}_X\colon R\to K_0(\fraka')$ from~\eqref{def:ev}, we obtain an element which we denote by $S_V(X^{\otimes n})$ or $S_V(X)$. Let $f\in G$. If $\chi_V$ is the (virtual) character of $V$ then
\begin{equation}
\trace{f;S_V(X)}=\frac{1}{n!}\sum_{\sigma\in \Sigma_n}\chi_V(\sigma^{-1})\trace{\sigma f^{\otimes n}; X^{\otimes n}}.
\end{equation}
To see this note that the formula is ${\mathbbm Z}$-linear on $V$. It is enough to prove this for irreducible $V=V_\pi$. We know that $V_\pi\otimes S_{V_\pi}(X)$ is the image of the projector
\[
q:=\frac{\chi_\pi(1)}{n!}\sum \chi_\pi (\sigma^{-1})\sigma\colon X^{\otimes n}\to X^{\otimes n}
\]
and since $\chi_\pi(1)=\dime{V_\pi}\neq 0$ we have
\begin{equation*}
\begin{split}
\trace{S_{V_\pi}(f);S_{V_\pi}(X)} & = \frac{1}{\chi_\pi(1)}\trace{\id{}\otimes S_{V_\pi}(f);V_\pi\otimes S_{V_\pi}(X)}\\
& = \frac{1}{\chi_\pi(1)}\trace{qf^{\otimes n};X^{\otimes n}}\\
& = \frac{1}{n!}\sum_{\sigma\in \Sigma_n}\chi_\pi(\sigma^{-1})\trace{\sigma f^{\otimes n}; X^{\otimes n}}.
\end{split}
\end{equation*}

\end{example}

Now we consider the Adams operations. For an integer $n\geq 1$ and a $\lambda-$ring $A$, define the $n-$th Adams operation $\psi_n\colon A\to A$ to be given by
\begin{equation}\label{def:adams}
-t\frac{d\lambda (x)}{dt}/\lambda(x)=\sum_{n\geq 1} \psi_n(x)(-t)^n.
\end{equation}

We shall obtain a formula that is similar to the one in~\cite[9.1, Ex. 3]{Serre71} for the classical case of $\fraka=k-{\rm fgmod}$\footnote{I thank B. Kahn for pointing out to me this reference.}. Note that the following result should be related to what the remark~\cite[15.6]{Kahn2009a} suggests.
\begin{theorem}\label{thm:adams-g} 
For any object $X$ and any $g\in G$, we have
\[
\overline{\chi}_{X}(g^n)=\psi_n\bigl(\overline{\chi}_{X}(g)\bigr)
\]
\end{theorem}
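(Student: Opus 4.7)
The plan is to reduce the identity to a statement about Newton power sums in $\Lambda(k)$ with $k={\rm End}_{\fraka}({\mathbbm 1})$, and then to verify it by isolating a single trace lemma relating $\psi_m(X)$ to $g^m$. By Proposition~\ref{thm:characteristic}, $\overline{\chi}$ intertwines Adams operations, and the $\lambda$-structure on $FC\bigl(G,\Lambda(k)\bigr)$ is value-wise, so $\overline{\chi}_{\psi_n(X)}(g)=\psi_n\bigl(\overline{\chi}_X(g)\bigr)$. The theorem is therefore equivalent to the equality $\overline{\chi}_{\psi_n(X)}(g)=\overline{\chi}_X(g^n)$ in $\Lambda(k)$.

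The computational heart of the argument is the single-coefficient lemma
\[
\trace{g;\psi_m(X)}=\trace{g^m;X}\qquad (m\geq 1).
\]
To prove it I would expand, via the Frobenius characteristic, $\psi_m(X)=\sum_{\pi\vdash m}\chi_\pi(c_m)\,S_\pi(X)$ in $K_0(\rep{\fraka}{G})$, where $c_m\in\Sigma_m$ denotes an $m$-cycle. Applying Example~\ref{ex:character-trace} to each $\trace{g;S_{V_\pi}(X^{\otimes m})}$ and interchanging the two summations, column orthogonality of the (rational) characters of $\Sigma_m$ collapses $\sum_\pi\chi_\pi(c_m)\chi_\pi(\sigma^{-1})$ to $m$ when $\sigma$ is an $m$-cycle and to zero otherwise; for each such $\sigma$ the identity $\trace{\sigma g^{\otimes m};X^{\otimes m}}=\trace{g^m;X}$ of~\cite{Deligne1990} (already used in Lemma~\ref{thm:trace-id-schur}) finishes the calculation, with combinatorial factor $m\cdot(m-1)!/m!=1$.

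The conclusion is then a matter of Newton's formalism. Since $k$ contains ${\mathbbm Q}$, elements of $\Lambda(k)$ are determined by their Newton power sums. The map $\trace{g;-}\colon K_0(\rep{\fraka}{G})\to k$ is a ring homomorphism, multiplicative because $g$ acts diagonally on tensor products. Applying it to the universal Newton identity $\psi_m(Y)=P_m(\lambda^1 Y,\ldots,\lambda^m Y)$ in the special $\lambda$-ring $K_0(\rep{\fraka}{G})$ identifies the $m$-th Newton power sum of $\overline{\chi}_Y(g)$ with $\trace{g;\psi_m(Y)}$. Taking $Y=\psi_n(X)$ and using $\psi_m\circ\psi_n=\psi_{mn}$ together with the lemma gives $p_m\bigl(\overline{\chi}_{\psi_n(X)}(g)\bigr)=\trace{g^{mn};X}$; applying the same identity with $(Y,g)$ replaced by $(X,g^n)$ gives $p_m\bigl(\overline{\chi}_X(g^n)\bigr)=\trace{(g^n)^m;X}=\trace{g^{mn};X}$. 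Both series therefore agree at every Newton level and hence coincide.

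The only non-formal step is the lemma, so the hard part will be the orthogonality-plus-Frobenius calculation and the careful tracking of the scalar factors there; everything else reduces to Newton bookkeeping and the ring-theoretic functoriality of the trace.
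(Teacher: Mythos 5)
Your proof is correct and follows essentially the same route as the paper's: both reduce to the single trace identity $\trace{g;\psi_m(X)}=\trace{g^m;X}$ and then conclude by comparing Newton power sums (ghost components) in $\Lambda(k)$, which determine elements since $k\supset{\mathbbm Q}$. The only minor organizational difference is in establishing the trace identity: the paper applies the (linearly extended) trace formula of Example~\ref{ex:character-trace} directly to the virtual module $L_m$ using its explicit character (support on $m$-cycles, value $m$), whereas you expand $\psi_m=\sum_\pi\chi_\pi(c_m)S_\pi$ via the Schur decomposition of $p_m$ and then collapse by column orthogonality; these are the same computation in two equivalent forms, with the same combinatorial factor $m\cdot(m-1)!/m!=1$. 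Your write-up also spells out the paper's ``the rest is a formal consequence'' by explicitly matching $p_m\bigl(\overline{\chi}_{\psi_n(X)}(g)\bigr)=\trace{g^{mn};X}=p_m\bigl(\overline{\chi}_X(g^n)\bigr)$ using $\psi_m\circ\psi_n=\psi_{mn}$, which is the right formalization of that step.
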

\begin{proof}
This is a formal consequence of classical results. For any integer $m$ Let $L_m$ be an element of ${\mathbbm Q}\Sigma_n$ which under the isomorphism $\Lambda\simeq R$ corresponds to the Newton power sum function $p_m=x_1^m+x_2^m+\cdots$. It is known (\cite[III, \S 2]{knutson}) that (the character of) $L_m$ is given by $\sigma\mapsto m$ if $\sigma$ is a $m-$cycle and zero otherwise. We use~\ref{ex:character-trace} and consider the corresponding element 
\[
q=\frac{1}{n!}\sum_{\sigma} L_{(n)}(\sigma^{-1})\sigma=\frac{1}{(n-1)!}\sum_{\text{$n-$cycles}}\sigma
\]
of ${\mathbbm Q}\Sigma_n$. Therefore applying the homomorphism ${\rm ev}_X\colon R\to K_0(\fraka')$ to $L_m$ with $\fraka':=\rep{\fraka}{G}$ we have similar to~\ref{thm:trace-id-schur} (in the proof of~\cite[7.1.2]{Deligne1990} put $u_i=g$ for all $i$) the equalities
\begin{equation}
 \begin{split}
\trace{g;{\rm ev}_X(L_{(m)})} & = \trace{qg^{\otimes m};X^{\otimes m}}\\
		   & = \frac{1}{(m-1)!}\sum_{\text{$m-$cycles}} \trace{g^m;X}\\
		   & = \trace{g^m;X}.
 \end{split}
\end{equation}
Now it is enough to apply ${\rm tr}_G\circ {\rm ev}_X$ to the identity
\[
\sum_{k=1}^n (-1)^k p_ke_{n-k}=-ne_n
\]
in $\Lambda\simeq R$ where $e_i$'s correspond to alternating representations and $p_j$'s are the above power sums. The result is an identity between coefficients in~\eqref{def:adams} so that
\[
-t\frac{d\overline{\chi}_{X}(g)}{dt}/{\overline{\chi}_{X}(g)}=\sum_{n\geq 1} \trace{g^n;X}(-t)^n
\]
in the $\lambda-$ring $\Lambda(k)=1+tk[\![t]\!]$. The rest is a formal consequence of definitions.
\end{proof}
\begin{remark}
Note that the proof does not really use $G$ as whole. The formula is valid for any endomorphism. Also note that the formula in~\ref{thm:adams-g} states nothing but
\[
\overline{\chi}_{\psi_n(X)}(g)=\overline{\chi}_{X}(g^n).
\]

\end{remark}

%%%%%%%%%%%%%%%%%%%%%%%%%%%%%%%%%%%%%%%%%%%%%%%%%%%%%%%%%%%%%%%%%%%%%%%%%%%%%
%%%%%%%%%%%%%%%%%%%%%%%%%%%%%%%%%%%%%%%%%%%%%%%%%%%%%%%%%%%%%%%%%%%%%%%%%%%%%
%%%%%%%%%%%%%%%%%%%%%%%%%%%%%%%%%%%%%%%%%%%%%%%%%%%%%%%%%%%%%%%%%%%%%%%%%%%%%
%%%%%%%%%%%%%%%%%%%%%%%%%%%%%%%%%%%%%%%%%%%%%%%%%%%%%%%%%%%%%%%%%%%%%%%%%%%%%

\begin{bibdiv}
\begin{biblist}
\bib{Atiyah1966}{article}{
  author = {Atiyah, M. F.},
  title = {Power operations in {$K$}-theory},
  journal = {Quart. J. Math. Oxford Ser. (2)},
  year = {1966},
  volume = {17},
  pages = {165--193},
}
\bib{fdmtm}{article}{
  author = {Biglari, S.},
  title = {On finite dimensionality of mixed {T}ate motives},
  journal = {J. $K-$Theory},
  year = {2009},
  volume = {4},
  pages = {145--161},
  number = {1},
}
\bib{kfttc}{article}{
   author={Biglari, S.},
   title={A K\"unneth formula for tensor triangulated categories},
   journal={J. Pure Appl. Algebra},
   volume={},
   date={2007},
   number={2},
   pages={},
   issn={},
   review={},
}
\bib{Deligne1990}{article}{
  author = {Deligne, P.},
  title = {Cat\'egories tannakiennes},
  conference={
    title = {The {G}rothendieck {F}estschrift, {V}ol.\ {II}},
},  
book={
  series = {Progr. Math.},
  year = {1990},
  publisher = {Birkh\"auser Boston},
  volume = {87},
  place={Boston, MA},
},
  pages = {111--195},  
  
}
\bib{deligne-2002}{article}{
   author={Deligne, P.},
   title={Cat\'egories tensorielles},
   series={Dedicated to Yuri I. Manin on the occasion of his 65th birthday},
   journal={Mosc. Math. J.},
   volume={2},
   date={2002},
   number={2},
   pages={227--248},
}
\bib{Gillet1996}{article}{
  author = {Gillet, H.}, author={Soul{\'e}, C.},
  title = {Descent, motives and {$K$}-theory},
  journal = {J. Reine Angew. Math.},
  year = {1996},
  volume = {478},
  pages = {127--176},
}
\bib{Heinloth2007}{article}{
  author = {Heinloth, F.},
  title = {A note on functional equations for zeta functions with values in
	{C}how motives},
  journal = {Ann. Inst. Fourier (Grenoble)},
  year = {2007},
  volume = {57},
  pages = {1927--1945},
  number = {6},
}
\bib{Kahn2009a}{article}{
  author = {Kahn, B.},
  title = {Zeta functions and motives},
  journal = {Pure Appl. Math. Q.},
  year = {2009},
  volume = {5},
  pages = {507--570},
  number = {1},
}
\bib{knutson}{book}{
   author={Knutson, D.},
   title={$\lambda-$rings and the representation theory of the symmetric group},
   series={Lecture Notes in Mathematics, Vol. 308},
   publisher={Springer-Verlag},
   place={Berlin},
   date={1973},
   pages={iv+203},
}
\bib{saavedra}{book}{
   author={Saavedra Rivano, N.},
   title={Cat\'egories Tannakiennes},
   series={Lecture Notes in Mathematics, Vol. 265},
   publisher={Springer-Verlag},
   place={Berlin},
   date={1972},
   pages={ii+418},
}
\bib{Serre71}{book}{
   author={Serre, J.-P.},
   title={Repr\'esentations lin\'eaires des groupes finis},
   edition={2i\`eme \'ed.},
   publisher={Hermann},
   place={Paris},
   date={1971},
   pages={},
}
\end{biblist}
\end{bibdiv}
%%%%%%%%%%%%%%%%%%%%%%%%%%%%%%%%%%%%%%%%%%%%%%%%%%%%%%%%%%%%%%%%%%%%%%%%%%%%%
%%%%%%%%%%%%%%%%%%%%%%%%%%%%%%%%%%%%%%%%%%%%%%%%%%%%%%%%%%%%%%%%%%%%%%%%%%%%%
\end{document}